\documentclass[oneside,10pt]{article}
\usepackage[b5paper]{geometry}	

\usepackage{amsfonts,amsmath,latexsym,amssymb}
\usepackage{mathrsfs,upref}
\usepackage{mathptmx}		    	                            		
\usepackage{amscd}
\usepackage{amsthm}
\usepackage{graphicx}
\usepackage{amsmath}
\usepackage{lipsum}

\newcommand\blfootnote[1]{%
  \begingroup
  \renewcommand\thefootnote{}\footnote{#1}%
  \addtocounter{footnote}{-1}%
  \endgroup}

\newtheorem{theorem}{Theorem}

\newtheorem{corollary}[theorem]{Corollary}

\newtheorem{definition}[theorem]{Definition}

\newtheorem{lemma}[theorem]{Lemma}

\newtheorem{proposition}[theorem]{Proposition}
\newtheorem{remark}[theorem]{Remark}

\newcommand\supp{\mathop{\rm supp}}
\newcommand\esssup{\mathop{\rm ess \, sup}}
\newcommand\essinf{\mathop{\rm ess \, inf}}

\begin{document}

\title{Fractional type operators on Hardy spaces associated with ball quasi-Banach function spaces}
\author{Pablo Rocha}

\maketitle

\begin{abstract}
For $0 \leq \alpha < n$ and $m \in \mathbb{N} \cap \left(1 - \frac{\alpha}{n}, +\infty \right)$, we consider certain fractional type 
operators $T_{\alpha, m}$ generated by $m$-orthogonal matrices and prove that, for $0 < \alpha < n$, $T_{\alpha, m}$ can be extended to a bounded operator $H_X \to Y$ and, for $\alpha = 0$, $T_{0, m}$ can be extended to a bounded operator $H_X \to X$, where $X$ and $Y$ are certain ball quasi-Banach spaces related to each other and $H_X$ is the Hardy space associated with $X$. In particular, our results apply to weighted Lebesgue spaces, variable Lebesgue spaces, Lorentz spaces and Orlicz spaces, the last two are new. Our proofs rely on the ssumption that $X$ is $\mathcal{O}(n)$-invariant, the theory of weighted Hardy spaces, the Rubio de Francia iteration algorithm and the finite atomic decomposition of $H_X$.
\end{abstract}

\blfootnote{{\bf Keywords}: fractional type operators, Hardy type spaces, ball quasi-Banach function spaces. \\
{\bf 2020 Mathematics Subject Classification:} 42B20, 47A30, 42B30, 42B25, 42B35}

\section{Introduction}

Let $0\leq \alpha <n$ and $m \in \mathbb{N} \cap \left(1 - \frac{\alpha}{n}, +\infty \right)$, we consider the following generalization of the Riesz potential
\begin{equation}
T_{\alpha, m}f(x)=\int_{\mathbb{R}^{n}} \left\vert x-A_{1}y\right\vert ^{-\alpha
_{1}}...\left\vert x-A_{m}y\right\vert ^{-\alpha _{m}}f(y)dy,  \label{T}
\end{equation}
where $\alpha _{1}+...+\alpha _{m}=n-\alpha$, and the $A_j$'s are $n \times n$ orthogonal matrices. For the case $\alpha = 0$, we assume that 
$A_i - A_j$ is invertible if $i \neq j$. Indeed, the family of operators in (\ref{T}) contains to the Riesz potential. For $0 < \alpha < n$, the Riesz potential $I_{\alpha}$ is defined by
\begin{equation} \label{Riesz pot}
I_{\alpha}f(x) = \int_{\mathbb{R}^{n}} \frac{f(y)}{|x-y|^{n- \alpha}} dy, 
\end{equation}
for any $f \in L^{p}(\mathbb{R}^{n})$ and $1 \leq p < \frac{n}{\alpha}$. Then, taking $0 < \alpha < n$, $m=1$ and $A_1 = I$ in (\ref{T}), we have that $I_\alpha = T_{\alpha, 1}$. 

For $\alpha = 0$ and $m \geq 2$, the operator $T_{0,m}$ is bounded on $L^p(\mathbb{R}^n)$ for all $1 < p < \infty$ 
(see \cite[Theorem 6]{Ro-Ur2017}). Now, for $0 < \alpha < n$ and $m \geq 1$, the operator $T_{\alpha, m}$ has the same behavior that the 
Riesz potential on $L^{p}(\mathbb{R}^{n})$. In fact,
\[
|T_{\alpha, m}f(x) | \leq C \sum_{j=1}^{m} \int_{\mathbb{R}^{n}} |A_{j}^{-1}x - y |^{\alpha - n} |f(y)| dy = C \sum_{j=1}^{m} I_{\alpha}(|f|)(A_{j}^{-1}x),
\]
for all $x \in \mathbb{R}^{n}$, this pointwise inequality implies that $T_{\alpha, m}$ is a bounded operator from 
$L^{p}(\mathbb{R}^{n})$ into $L^{q}(\mathbb{R}^{n})$ for $1 < p < \frac{n}{\alpha}$ and $\frac{1}{q} = \frac{1}{p} - \frac{\alpha}{n}$. However, for $0 < \alpha < n$, $m \geq 2$ and $0 < p \leq \frac{n}{n + \alpha}$, the behavior of $T_{\alpha, m}$ on $H^p(\mathbb{R}^n)$ 
differs from that of $I_{\alpha}$. Indeed, it is well known that $I_{\alpha}$ can be extended to 
a bounded operator $H^p(\mathbb{R}^n) \to L^{q}(\mathbb{R}^{n})$ for $0 < p \leq 1$ and $\frac{1}{q} = \frac{1}{p} - \frac{\alpha}{n}$, 
and $H^p(\mathbb{R}^n) \to H^{q}(\mathbb{R}^{n})$ for $0 < p \leq \frac{n}{n+\alpha}$ and $\frac{1}{q} = \frac{1}{p} - \frac{\alpha}{n}$ 
(see \cite{Taible}). On the other hand, the present author together with M. Urciuolo in \cite{Ro-Ur2012} proved that $T_{\alpha, m}$ can be extended to a bounded operator $H^p(\mathbb{R}^n) \to L^{q}(\mathbb{R}^{n})$ for $0 < p \leq 1$ and $\frac{1}{q} = \frac{1}{p} - \frac{\alpha}{n}$, and also showed that the $H^p(\mathbb{R}^n) \to H^{q}(\mathbb{R}^{n})$ boundedness does not 
hold for $T_{\alpha, m}$ with $n=1$, $0 < \alpha < 1$, $m = 2$, $A_1$ = 1, $A_2 = -1$, $0 < p \leq \frac{1}{1+\alpha}$ and 
$\frac{1}{q} = \frac{1}{p} - \alpha$.

The author in \cite{Rocha2023} obtained weighted estimates for the operator $T_{\alpha, m}$. More precisely, for $\alpha = 0$, $m \geq 2$ 
and $0 < p \leq 1$, we proved that the operator $T_{0,m}$ given by (\ref{T}), with $A_i$ real matrices (not necessarily orthogonal) such that 
$A_i - A_j$ invertible if $i \neq j$, can be extended to a bounded operator $H^p_{w}(\mathbb{R}^n) \to L^p_{w}(\mathbb{R}^n)$, when 
$w \in \mathcal{A}_{\infty}$ and, for every $j= 1, ..., m$, $w(A_j x) \lesssim w(x)$ a.e. $x$; and for $0 < \alpha < n$, $m \geq 1$ and 
$0 < s <1$, we proved that $T_{\alpha, m}$ can be extended to a bounded operator $H^p_{w^p}(\mathbb{R}^n) \to L^q_{w^q}(\mathbb{R}^n)$ for every $s \leq p \leq 1$ and $\frac{1}{q} = \frac{1}{p} - \frac{\alpha}{n}$, when $w^{\frac{n}{(n - \alpha)s}} \in \mathcal{A}_1$, 
$\frac{r_w}{r_w - 1} < \frac{n}{\alpha}$ (where $r_w$ is the critical index of $w$ for the reverse H\"older condition) and, 
for every $j= 1, ..., m$, $w(A_j x) \lesssim w(x)$ a.e. $x$.

In the variable setting, considering exponents $p(\cdot)$ satisfying $\log$-H\"older conditions and assuming that $p(A_j x) = p(x)$ for 
each $j=1, ..., m$, the author and M. Urciuolo in \cite{Ro-Urc}, via the infinite atomic decomposition of $H^{p(\cdot)}(\mathbb{R}^n)$ established in \cite{Nakai}, proved the $H^{p(\cdot)}(\mathbb{R}^n) \to L^{q(\cdot)}(\mathbb{R}^n)$ boundedness of $T_{\alpha, m}$ for 
$0 < p_{-} \leq p(\cdot) \leq p_{+} < \frac{\alpha}{n}$ and $\frac{1}{q(\cdot)} = \frac{1}{p(\cdot)} - \frac{\alpha}{n}$, as well as the 
$H^{p(\cdot)}(\mathbb{R}^n) \to H^{q(\cdot)}(\mathbb{R}^n)$ boundedness of the Riesz potential $I_{\alpha}$. Another proof of these results, but by using the finite atomic decomposition developed in \cite{UrWang}, was given by the author in \cite{Rocha2018}, where we rely on
the theory of weighted Hardy spaces and the Rubio de Francia iteration algorithm. Such method allows us to avoid the more delicate convergence arguments that are often necessary when utilizing the infinite atomic decomposition. The boundedness of 
$T_{\alpha, m}$ on variable Morrey-Hardy spaces was obtained by J. Tan and J. Zhao in \cite{Tan2016}. 

Nowadays, many variants of Hardy type spaces on $\mathbb{R}^n$, as weighted Hardy spaces, Hardy-Lorentz spaces, Hardy-Orlicz spaces, 
Hardy-Herz spaces, Hardy-Morrey spaces, Musielak-Orlicz-Hardy spaces, and variable Hardy spaces (among others), meet under a common framework; namely: the theory of Hardy spaces associated with ball quasi-Banach function spaces. This theory was introduced by Y. Sawano, K.-P. Ho, D. Yang and S. Yang in \cite{sawa}. For example, if one considers the space 
$L^{p}(\mathbb{R}^n)$, $0 < p < \infty$, with the usual quasi-norm given by $\| f \|_{p}^p = \int_{\mathbb{R}^n} |f(x)|^p dx$, then 
$X:=(L^{p}(\mathbb{R}^n), \| \cdot \|_{p})$ is a ball quasi-Banach function space and its Hardy type space associated 
$H_X (\mathbb{R}^n)$ coincides with $H^p(\mathbb{R}^n)$, where $H_X (\mathbb{R}^n)$ is defined by (\ref{Hx def}) below and 
$H^p(\mathbb{R}^n)$ is the classical Hardy space defined in \cite{Stein}. 

Ones of the principal results obtained in \cite{sawa} is the atomic and molecular characterization of the Hardy space $H_X (\mathbb{R}^n)$ associated with a ball quasi-Banach function space $X$. These characterizations rely on a number of additional technical and structural assumptions related to the boundedness of the powered Hardy-Littlewood maximal operator on $X$ (see (\ref{A1}) and (\ref{A2}) below). As is well known, in the classic context, such decompositions are very useful when it studies the behavior of certain operators, as singular and fractional integrals, on a given Hardy type space (see for instance \cite{cuerva}, \cite{Nakai}, \cite{Ro-Urc}, \cite{Stein}, 
\cite{wheeden}, \cite{Taible}).

Later, X. Yan, D. Yang and W. Yuan in \cite{Yan} gave a finite atomic characterization of $H_X(\mathbb{R}^n)$. As an
application, they prove that the dual space of $H_X(\mathbb{R}^n)$ is the Campanato space associated with $X$.

Y. Chen, H. Jia and D. Yang in \cite{YChen}, assuming (\ref{A1}) and (\ref{A2}) below, prove that the Riesz potential 
$I_{\alpha}$ can be extended to a bounded operator from $H_X(\mathbb{R}^n)$ to $H_{X^{\beta}}(\mathbb{R}^n)$, $\beta > 1$, if and only in  if 
for any ball $B \subset \mathbb{R}^n$, $|B|^{\frac{\alpha}{n}} \lesssim \| \chi_B \|_{X}^{(\beta-1)/\beta}$, where $X$ is a ball quasi-Banach function space and $X^{\beta}$ denotes the $\beta$-convexification of $X$. Moreover, using extrapolation techniques, the authors also proved the $H_X(\mathbb{R}^n) \to H_Y(\mathbb{R}^n)$ boundedness of $I_{\alpha}$, for certain ball quasi-Banach function spaces $X$ and $Y$.

Recently, the author in \cite{Rocha2025}, for $0 \leq \alpha < n$, studied the convolution operators $T_{\alpha}f := K_{\alpha} \ast f$, 
where $K_{\alpha}$ is a kernel of type $(\alpha, N)$ (see also \cite{Folland}). These operators include the classical singular and 
fractional integrals. By means of the infinite atomic decomposition and the maximal characterization of $H_X(\mathbb{R}^n)$ established in 
\cite{sawa}, together with an off-diagonal Fefferman-Stein vector-valued inequality for the fractional maximal operator on the 
$p$-convexification of ball quasi-Banach function spaces, we prove that if $X$ and $Y$ are ball quasi-Banach function spaces satisfying certain hypotheses and $N$ is conveniently chosen, then, for $\alpha = 0$, the operator $T_0$ can be extended to a bounded operator 
$H_{X}(\mathbb{R}^n) \to X$ and $H_{X}(\mathbb{R}^n) \to H_{X}(\mathbb{R}^n)$; and, for $0 < \alpha < n$, the operator $T_{\alpha}$ can 
be extended to a bounded operator $H_{X}(\mathbb{R}^n) \to Y$ and $H_{X}(\mathbb{R}^n) \to H_{Y}(\mathbb{R}^n)$. These results apply to weighted Lebesgue spaces, variable Lebesgue spaces, mixed-norm Lebesgue spaces and Lorentz spaces.

We point out that for $0 \leq \alpha < n$ and $m \geq 2$, the operator $T_{\alpha, m}$ given by (\ref{T}) is not a convolution type operator.

Our main result is established in Theorem \ref{main thm} below, which state that, for $0 < \alpha < n$, $T_{\alpha, m}$ can be extended to a bounded operator $H_X \to Y$ and, for $\alpha = 0$, $T_{0, m}$ can be extended to a bounded operator $H_X \to X$, where $X$ and $Y$ are certain ball quasi-Banach spaces related to each other and $H_X$ is the Hardy space associated with $X$. In particular, our results apply to weighted Lebesgue spaces, variable Lebesgue spaces, Lorentz spaces and Orlicz spaces, the last two are new. Our proofs rely on the assumption that $X$ is $\mathcal{O}(n)$-invariant, the theory of weighted Hardy spaces, the Rubio de Francia iteration algorithm and the finite atomic decomposition of $H_X$.

\

The paper is organized as follows. Section \ref{basico} starts with the basics of the Hardy spaces theory associated with ball quasi-Banach function spaces. The finite atomic Hardy space associated with a ball quasi-Banach function space is also presented in Section \ref{basico}. Some weighted Hardy estimates for the operator $T_{\alpha, m}$ are established in Section \ref{weighted estimates}. In Section 
\ref{resultados principales}, we state and prove our main result. Finally, in Section \ref{appl}, we give four concrete applications to illustrate our results.

\

\textbf{Notation:} The symbol $S \lesssim T$ stands for the inequality $S \leq cT$ for some constant $c$. The symbol $S \approx T$ 
stands for $T \lesssim S \lesssim T$. We denote by $B(x_0, r)$ the ball centered at $x_0 \in \mathbb{R}^{n}$ of radius $r$. Given 
$\gamma >0$ and a ball $B = B(x_0, r)$, we set $\gamma B = B(x_0, \gamma r)$. The orthogonal group $\mathcal{O}(n)$ is defined by 
$\mathcal{O}(n):=\{ A \in GL_n(\mathbb{R}) : A^{t}= A^{-1} \}$. For a measurable subset $E \subset \mathbb{R}^{n}$ we denote $|E|$ and 
$\chi_E$ the Lebesgue measure of $E$ and the characteristic function of $E$ respectively. Given a real number $s \geq 0$, we write 
$\lfloor s \rfloor$ for the integer part of $s$. As usual we denote with $\mathcal{S}(\mathbb{R}^{n})$ the space of smooth and rapidly decreasing functions, with $\mathcal{S}'(\mathbb{R}^{n})$  the dual space. If $\beta$ is the multiindex $\beta=(\beta_1, ..., \beta_n)$, then 
$|\beta| = \beta_1 + ... + \beta_n$. Given a function $g$ on $\mathbb{R}^n$ and $t > 0$, we write $g_t(x) = t^{-n} g(t^{-1} x)$. Let $d$ be a non negative integer and $\mathcal{P}_{d}$ the subspace of $L^{1}_{loc}(\mathbb{R}^{n})$ formed by all the polynomials of degree at most $d$. Given a measurable function $h$, the expression $h \perp \mathcal{P}_{d}$ stands for $\int h(x) P(x) dx = 0$ for all $P \in \mathcal{P}_{d}$.

Throughout this paper, $C$ will denote a positive constant, not necessarily the same at each occurrence.

\section{Preliminaries} \label{basico}

\subsection{Ball quasi-Banach function spaces} In the sequel, $\mathfrak{M} = \mathfrak{M}(\mathbb{R}^n)$ is the set of all measurable functions on $\mathbb{R}^n$ and $\mathfrak{M}_{+} = \mathfrak{M}_{+}(\mathbb{R}^n)$ is the cone of all non-negative measurable functions on $\mathbb{R}^n$.

For every $x \in \mathbb{R}^n$ and $r > 0$ fixed, let $B(x,r) := \{ y \in \mathbb{R}^n: |x-y| < r \}$. Now, we define
\[
\mathbb{B} := \{ B(x,r) : x \in \mathbb{R}^n \,\, \text{and} \,\, r > 0 \}.
\]

\begin{definition}
A mapping $\rho : \mathfrak{M}_{+} \to [0, \infty]$ is called a ball quasi-Banach function norm if it satisfy the following properties:

$(P1)$ $\rho(f) = 0$ implies that $f = 0$ a.e.; 

$(P2)$ $\rho(\alpha f) = |\alpha| \rho(f)$ for all $\alpha \in \mathbb{C}$ and all $f \in \mathfrak{M}_{+}$;

$(P3)$ there exists $C \geq 1$ such that $\rho(f+g) \leq C (\rho(f) + \rho(g))$ for all $f, g \in \mathfrak{M}_{+}$;

$(P4)$ if some $f, g \in \mathfrak{M}_{+}$ satisfy $f \leq g$ a.e., then $\rho(f) \leq \rho(g)$;

$(P5)$ if some $f_n, f \in \mathfrak{M}_{+}$ satisfy $f_n \uparrow f$ a.e., then $\rho(f_n) \uparrow \rho(f)$;

$(P6)$ if $B \in \mathbb{B}$, then $\rho(\chi_B) < \infty$.
\end{definition}

\begin{definition}
Let $\rho$ be a ball quasi-Banach function norm. We then define the corresponding ball quasi-Banach function space $X = X(\rho)$ as the set
\[
X = \{ f \in \mathfrak{M} : \rho(|f|) < \infty \}.
\]
For each $f \in X$, define
\[
\Vert f \Vert_X = \rho(|f|).
\]
\end{definition}

\begin{remark} \label{prop X}
Let $(X, \Vert \cdot \Vert_X)$ be a ball quasi-Banach function space, then it is easy to check that

$(i)$ $\Vert f \Vert_X = 0$ implies that $f = 0$ a.e.;

$(ii)$ $\Vert \alpha f \Vert_X = |\alpha| \Vert f \Vert_X$ for all $\alpha \in \mathbb{C}$ and all $f \in X$;

$(iii)$ there exists $C \geq 1$ such that $\Vert f+g \Vert_X \leq C (\Vert f \Vert_X + \Vert g \Vert_X)$ for all $f, g \in X$;

$(iv)$ if $f \in \mathfrak{M}$, $g \in X$ are such that $|f| \leq |g|$ a.e., then $f \in X$ and $\Vert f \Vert_X \leq \Vert g \Vert_X$;

$(v)$ if $0 \leq f_n \uparrow f$ a.e., then either $f \notin X$ and $\Vert f_n \Vert_X \uparrow \infty$, or $f \in X$ and 
$\Vert f_n \Vert_X \uparrow \Vert f \Vert_X$;

$(vi)$ if $B \in \mathbb{B}$, then $\chi_B \in X$.
\end{remark}

A ball quasi-Banach function space $X$ is called a \textit{ball Banach function space} if the constant $C$, appearing in $(iii)$ of 
Remark \ref{prop X}, is equal to $1$, and for any ball $B \in \mathbb{B}$ there exists a positive constant $C_{(B)}$, depending on $B$, such that
\[
\int_{B} |f(x)| dx \leq C_{(B)} \| f \|_{X},
\]
for all $f \in X$. Thus, every Banach function space is a ball Banach function space (see \cite[Definition 1.3]{Bennett}). An interesting work on the properties of (quasi)-Banach functions spaces can be found in \cite{Nekvinda}.

\begin{definition} (See \cite{Bennett})
For any ball Banach function space $X$, the associate space (also called the K\"othe dual) $X'$ is defined by setting
\[
X' := \left\{ f \in \mathfrak{M} : \| f \|_{X'} := \sup \{ \| f g\|_{L^{1}(\mathbb{R}^n)} : g \in X, \| g \|_{X} = 1 \} < \infty \right\}
\]
where $\| \cdot \|_{X'}$ is called the associate norm of $\| \cdot \|_{X}$.
\end{definition}

\begin{lemma} (\cite[Proposition 2.3]{sawa} and \cite[Lemma 2.6]{Zhang}) \label{doble dual}
Let $X$ be a ball Banach function space. Then $X'$ is also a ball Banach function space and $X$ coincides with its second associate space 
$X''$. In other words, a function $f$ belongs to $X$ if and only if it belongs to $X''$ and, in that case,
\[
\| f \|_{X} = \| f \|_{X''}.
\]
\end{lemma}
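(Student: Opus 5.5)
The plan is to follow the classical Lorentz--Luxemburg route for Banach function spaces, adapted to balls; this is essentially the argument of Bennett--Sharpley, Theorem 1.7 of Chapter 1. There are two parts. First I show that $X'$ satisfies (P1)--(P6), the triangle inequality with constant $1$, and the local integrability condition. Then I show $\|f\|_{X''}=\|f\|_X$.

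For the first part, most properties of $\|\cdot\|_{X'}$ come directly from the definition as a supremum of $L^1$ norms:
\begin{itemize}
\item Homogeneity, monotonicity and the triangle inequality with constant $1$ are immediate.
\item (P5), the Fatou property, follows from monotone convergence inside each $\int|fg|$ together with interchanging two suprema.
\item (P6): for a ball $B$, the local integrability condition of $X$ gives $\int|\chi_B g|\le C_{(B)}\|g\|_X$, so $\|\chi_B\|_{X'}\le C_{(B)}<\infty$.
\item (P1): if $\|f\|_{X'}=0$, test against $g=\chi_{B(0,k)}\chi_{\{|f|>1/j\}}/\|\chi_{B(0,k)}\|_X$, which lies in $X$ by (P4) and (P6). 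This gives $f=0$ a.e. on every ball, hence a.e.
\item Local integrability of $X'$: for a ball $B$, take $g=\chi_B$. Then $\int_B|f|\le\|\chi_B\|_X\,\|f\|_{X'}$.
\end{itemize}
So $X'$ is a ball Banach function space.

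For the second part, the definition of $X'$ gives Hölder's inequality $\int|fg|\le\|f\|_X\|g\|_{X'}$, which yields $\|f\|_{X''}\le\|f\|_X$ at once. The reverse inequality is the main obstacle, and I would prove it in two steps.

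\textbf{Step 1: reduce to bounded $f\ge0$ with support in a ball.} Set $f_k=\min(|f|,k)\chi_{B(0,k)}$. By (P5), applied in both $X$ and $X''$ (the latter is a ball Banach function space by applying the first part twice), it suffices to show $\|f_k\|_X\le\|f_k\|_{X''}$.

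\textbf{Step 2: Hahn--Banach for the truncated functions.} Here $f_k$ lies in $L^1(B)$ with $B=B(0,k)$. Suppose $\|f_k\|_{X''}<1<\|f_k\|_X$. Consider the convex set
\[
\mathcal{C}=\{h\in L^1(B): \|h\|_X\le1\}.
\]
\begin{itemize}
\item $\mathcal{C}$ is closed in $L^1(B)$. If $h_j\to h$ in $L^1$, pass to an a.e. convergent subsequence. Then $\liminf_j|h_j|\ge|h|$, and the Fatou property (P5), applied to $\inf_{i\ge j}|h_i|\uparrow$, together with (P4) gives $\|h\|_X\le\liminf\|h_j\|_X\le1$.
\item Since $f_k\notin\mathcal{C}$, the Hahn--Banach separation theorem in $L^1(B)$ gives $g\in L^\infty(B)$ with $\operatorname{Re}\int hg\le1<\operatorname{Re}\int f_k g$ for all $h\in\mathcal{C}$.
\item Replacing $h$ by $h\,\overline{\operatorname{sgn} g}$, which does not change the $X$-norm, gives $\int|hg|\le1$ for all $h\in\mathcal{C}$.
\end{itemize}

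To conclude I need $\|g\|_{X'}\le1$, which requires testing against all $h\in X$, not only those in $L^1(B)$. Since $g$ is supported in $B$, only $h\chi_B$ matters. Local integrability of $X$ gives $h\chi_B\in L^1(B)$ and $\|h\chi_B\|_X\le\|h\|_X$, so $\|g\|_{X'}\le1$. Hence
\[
1<\int f_k|g|\le\|f_k\|_{X''}\,\|g\|_{X'}<1,
\]
a contradiction. Homogeneity then gives $\|f_k\|_X\le\|f_k\|_{X''}$, and the monotone limit completes the proof.
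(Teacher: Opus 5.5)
Your proof is correct and is essentially the argument the paper relies on. The paper gives no proof of its own and simply cites \cite[Proposition 2.3]{sawa} and \cite[Lemma 2.6]{Zhang}, which carry out the Bennett--Sharpley Lorentz--Luxemburg argument (Fatou property, truncation, Hahn--Banach separation in $L^1$ of a ball) with finite-measure sets replaced by balls, exactly as you do. One cosmetic fix: in the sign-rotation step use $|h|\,\overline{\operatorname{sgn} g}$ rather than $h\,\overline{\operatorname{sgn} g}$; its $X$-norm is at most $\|h\|_X$, and then $\operatorname{Re}\int (|h|\,\overline{\operatorname{sgn} g})\,g=\int|hg|$.
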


\begin{definition}
A function $f \in X$ is said to have absolutely continuous quasi-norm in $X$ if $\| f \chi_{E_j} \|_{X} \downarrow 0$ for every sequence 
$\{ E_j \}_{j=1}^{\infty}$ satisfying $E_j \supset E_{j+1}$ for all $j \in \mathbb{N}$ and $\bigcap_{j=1}^{\infty} E_j = \emptyset$. The set of all functions in $X$ of absolutely continuous quasi-norm is denoted by $X_{a}$. If $X=X_a$, then the space $X$ itself is said to have an absolutely continuous quasi-norm.
\end{definition}

Taking into account that the orthogonal group $\mathcal{O}(n)$ induces an action on functions by $f_{A}(x) = f(A^{-1}x)$, where $A \in \mathcal{O}(n)$, we introduce the following definition.

\begin{definition}
We say that a ball quasi-Banach function space $X$ is $\mathcal{O}(n)$-invariant if for every $f \in X$ and 
$A \in \mathcal{O}(n)$, $f_A \in X$ with $\| f_A \|_{X} =  \| f \|_{X}$.
\end{definition}

\begin{remark} \label{On 1}
It is easy to check that if $X$ is a $\mathcal{O}(n)$-invariant ball Banach function space, then $X'$ is $\mathcal{O}(n)$-invariant.
\end{remark}

\begin{remark}
We point out that not all ball quasi-Banach function space is $\mathcal{O}(n)$-invariant. For instance, the mixed-norm spaces are not 
$\mathcal{O}(n)$-invariant (see \cite{Benedek}).
\end{remark}

\begin{definition}
Let $X$ be a ball quasi-Banach function space and $p \in (0, \infty)$. The $p$-convexification $X^p$ of $X$ is defined by setting 
$X^p := \{ f \in \mathfrak{M} : |f|^p \in X \}$ equipped with the quasi-norm $\| f \|_{X^p} : =  \| |f|^p \|^{1/p}_{X}$.
\end{definition}

\begin{remark} \label{On 2}
If $X$ is a $\mathcal{O}(n)$-invariant ball quasi-Banach function space and $p \in (0, \infty)$, then $X^p$ is $\mathcal{O}(n)$-invariant.
\end{remark}

\begin{definition}
Let $X$ be a ball quasi-Banach function space and $p \in (0, \infty)$. The space $X$ is said to be $p$-convex if there exists a positive constant $C$ such that, for any $\{ f_j \}_{j=1}^{\infty} \subset X^{1/p}$,
\[
\left\| \sum_{j=1}^{\infty} |f_j| \right\|_{X^{1/p}} \leq C \sum_{j=1}^{\infty} \left\| f_j \right\|_{X^{1/p}}.
\]
In particular, when $C = 1$, $X$ is said to be strictly $p$-convex.
\end{definition}

\subsection{Maximal functions and additional assumptions} \label{hypotheses} For $0 \leq \alpha < n$, we define the \textit{fractional maximal operator} 
$M_{\alpha}$ by
\[
(M_{\alpha}f)(x) = \sup_{B \ni x} |B|^{\frac{\alpha}{n} - 1}\int_{B} |f(y)| \, dy,
\]
where $f$ is a locally integrable function on $\mathbb{R}^{n}$ and the supremum is taken over all balls $B$ containing $x$. For 
$\alpha=0$, we have that $M_0 = M$, where $M$ is the {\it Hardy-Littlewood maximal operator} on $\mathbb{R}^{n}$.

For $\theta \in (0, \infty)$, the \textit{powered Hardy-Littlewood maximal operator} $M^{(\theta)}$ is defined by
\[
(M^{(\theta)}f)(x) = \left[ M (|f|^{\theta})(x) \right]^{1/ \theta}.
\]

\begin{remark} \label{Jensen}
Let $0 < \gamma < \theta < \infty$. Then, by Jensen's inequality, one has
\[
(M^{(\gamma)}f)(x) \leq (M^{(\theta)}f)(x), \,\,\,\, \forall x \in \mathbb{R}^n.
\]
\end{remark}

In what follows, we will assume the following two additional assumptions:

\

$A1)$ Let $X$ be a ball quasi-Banach function space. Assume that, for some $\theta, s \in (0, 1]$ with $\theta < s$, there exists a positive constant $C$ such that for any sequence of functions $\{ f_j \}_{j=1}^{\infty} \subset L^{1}_{loc}(\mathbb{R}^n)$
\begin{equation} \label{A1}
\left\Vert  \left\{ \sum_{j=1}^{\infty} (M^{(\theta)} f_j)^{s} \right\}^{1/s} \right\Vert_{X} \leq C
\left\Vert \left\{ \sum_{j=1}^{\infty} |f_j|^{s} \right\}^{1/s}  \right\Vert_{X}.
\end{equation}

$A2)$ Let $X$ be a ball quasi-Banach function space. Assume that there exist $s \in (0, 1]$, $q \in (1, \infty]$ and a positive constant $C$ such that $X^{1/s}$ is a ball Banach function space and for any $f \in (X^{1/s})'$
\begin{equation} \label{A2}
\left\Vert  M^{((q/s)')} f \right\Vert_{(X^{1/s})'} \leq C \left\Vert f  \right\Vert_{(X^{1/s})'}.
\end{equation}

\begin{remark} \label{cond equiv A2}
We observe that (\ref{A2}) is equivalent to that the Hardy-Littlewood maximal operator $M$ be bounded on $[(X^{1/s})']^{1/(q/s)'}$.
\end{remark}

\subsection{Hardy spaces associated with ball quasi-Banach function spaces} Let $(X, \Vert \cdot \Vert_X)$ be a ball quasi-Banach function space. Now, following to \cite{sawa}, we introduce the Hardy type space associated with $X$, which is denoted by $H_{X}(\mathbb{R}^n)$. We also present the finite Hardy space $H^{X, q, d}_{fin}(\mathbb{R}^n)$ given in \cite{Yan}.

Given $N \in \mathbb{Z}_{+}$, let 
\[
\mathcal{F}_{N}=\left\{ \varphi \in \mathcal{S}(\mathbb{R}^{n}):\sum\limits_{\left\vert \mathbf{\beta }\right\vert \leq N}\sup\limits_{x\in \mathbb{R}^{n}}\left( 1+\left\vert x\right\vert \right)^{N}\left\vert \partial^{\mathbf{\beta }}
\varphi(x) \right\vert := \| \varphi \|_{\mathcal{S}(\mathbb{R}^{n}), \, N}  \leq 1\right\}.
\] 
Given $f \in \mathcal{S}'(\mathbb{R}^{n})$, we define the grand maximal function of $f$ as 

\begin{equation} \label{grand max 0}
\mathcal{M}^{0}_{N} f(x) :=\sup \left\{ |\left( \phi_t \ast f\right)(x) | : t > 0, \phi \in \mathcal{F}_{N} \right\}.
\end{equation}

\begin{definition} (\cite[Theorem 3.1 - (ii) with $b = n/r + 1$]{sawa})
Let $X$ be a ball quasi-Banach function space such that the Hardy-Littlewood maximal operator $M$ is bounded on $X^{1/r}$ for some 
$r \in (0, \infty)$. For $N \geq \lfloor  n/r + 3 \rfloor$, define the Hardy space $H_{X}(\mathbb{R}^n)$ associated with $X$ as
\begin{equation}  \label{Hx def}
H_{X}(\mathbb{R}^n) = \left\{ f \in \mathcal{S}'(\mathbb{R}^n) : \| \mathcal{M}^{0}_{N} f \|_{X} < \infty \right\},
\end{equation}
Fixed  $N \geq \lfloor  n/r + 3 \rfloor$, we consider  $\| f \|_{H_{X}(\mathbb{R}^n)} := \| \mathcal{M}^{0}_{N} f \|_{X}$.
\end{definition}

Before establishing the finite Hardy space $H^{X, q, s}_{fin}(\mathbb{R}^n)$, we recall the definition of $X$-atom.

\begin{definition} \label{atom}
Let $X$ be a ball quasi-Banach function space satisfying (\ref{A1}) for some $0 < \theta < s \leq 1$ and (\ref{A2}) for some 
$q \in (1, \infty]$ and the same $s$ as in (\ref{A1}). Assume that $d \in \mathbb{Z}_{+}$ satisfies 
$d \geq d_X$, where $d_X := \lfloor n (1/\theta - 1) \rfloor$ and $\theta \in (0, 1]$ is the constant in (\ref{A1}). Then a function 
$a(\cdot)$ is called an $(X, q, d)$-atom if there exists a ball $B \in \mathbb{B}$ such that $\supp(a) \subset B$,
\[
\| a \|_{L^q(\mathbb{R}^n)} \leq \frac{|B|^{1/q}}{\| \chi_B \|_{X}},
\]
and $a(\cdot) \perp \mathcal{P}_d$.
\end{definition}

\begin{remark} \label{infinite atom}
For $q \geq 1$ fixed, every $(X, \infty, d)$-atom is an $(X, q, d)$-atom.
\end{remark}

\begin{definition}
Let $X$, $q$, $d$ and $s$ be as in Definition \ref{atom}. The finite atomic Hardy space $H^{X, q, d}_{fin}(\mathbb{R}^n)$, 
associated to $X$, is defined to be the set of all finite linear combinations of $(X, q, d)$-atoms. The quasi-norm 
$\| \cdot \|_{H^{X, q, d}_{fin}(\mathbb{R}^n)}$ in $H^{X, q, d}_{fin}(\mathbb{R}^n)$ is defined, for any 
$f \in H^{X, q, d}_{fin}(\mathbb{R}^n)$, by setting
\[
\| f \|_{H^{X, q, d}_{fin}(\mathbb{R}^n)} := \inf \left\{ \left\| \left\{ \sum_{j=1}^{N} \left( \frac{\lambda_j}{\| \chi_{B_j} \|_X} 
\right)^{s} \chi_{B_j} \right\}^{1/s} \right\|_X : f = \sum_{j=1}^{N} \lambda_j a_{j}, \, \{ \lambda_j \}_{j=1}^{N} \subset [0, \infty) \right\},
\]
where the infimum is taken over all finite linear combinations of $f$ in terms of $(X, q, d)$-atoms $\{ a_j \}_{j=1}^{N}$ supported, respectively, in the balls $\{ B_j \}_{j=1}^{N}$.
\end{definition}

\begin{theorem} \cite[Theorem 1.10]{Yan} \label{equiv quasi-norm}
Let $X$, $q$, and $d$ be as in Definition \ref{atom}. If $q \in (1, \infty)$, then 
$H^{X, q, d}_{fin}(\mathbb{R}^n) \subset H_{X}(\mathbb{R}^n)$. Moreover, $\| \cdot \|_{H^{X, q, d}_{fin}(\mathbb{R}^n)}$ 
and $\| \cdot \|_{H_{X}(\mathbb{R}^n)}$ are equivalent quasi-norms on the space $H^{X, q, d}_{fin}(\mathbb{R}^n)$.
\end{theorem}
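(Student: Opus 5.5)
This statement is \cite[Theorem 1.10]{Yan}, quoted from the literature; within the paper it is used as a black box. If I had to prove it, I would follow the standard finite-atomic scheme of Meda--Sj\"ogren--Vallarino, adapted to ball quasi-Banach function spaces. The proof has two inequalities.

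\emph{The easy direction.} For $f=\sum_{j=1}^N\lambda_j a_j \in H^{X,q,d}_{fin}(\mathbb{R}^n)$ we need
\[
\|f\|_{H_X(\mathbb{R}^n)} \lesssim \Big\|\Big\{\sum_j (\lambda_j/\|\chi_{B_j}\|_X)^s\chi_{B_j}\Big\}^{1/s}\Big\|_X .
\]
This is the atomic-to-Hardy estimate of \cite{sawa}. Split $\mathcal{M}^0_N a_j$ into its part on $2B_j$ and its part off $2B_j$.

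Off $2B_j$, the moment condition $a_j\perp\mathcal{P}_d$ with $d\ge d_X$ gives the pointwise bound
\[
\mathcal{M}^0_N a_j \lesssim \|\chi_{B_j}\|_X^{-1}\,\big(M\chi_{B_j}\big)^{(n+d+1)/n} \le \|\chi_{B_j}\|_X^{-1}\,\big(M^{(\theta)}\chi_{B_j}\big)^{1/\theta},
\]
since $(n+d+1)/n > 1/\theta$. This piece is then controlled by (\ref{A1}).

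On $2B_j$, the $L^q$ bound for $\mathcal{M}^0_N a_j \chi_{2B_j}$ combines with (\ref{A2}) through a duality argument in $X^{1/s}$. The argument tests against $g\in (X^{1/s})'$ and uses H\"older's inequality with exponents $q/s$ and $(q/s)'$, together with the boundedness of $M^{((q/s)')}$ on $(X^{1/s})'$. Summing over $j$ with the $s$-triangle inequality, $(\sum_j |\cdot|)^s\le\sum_j|\cdot|^s$, gives the bound. This direction also shows the inclusion $H^{X,q,d}_{fin}\subset H_X$.

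\emph{The hard direction.} We need $\|f\|_{H^{X,q,d}_{fin}}\lesssim\|f\|_{H_X}$ for $f$ in the finite space. I would take the Calder\'on--Zygmund decomposition at the level sets $\Omega_k=\{\mathcal{M}^0_N f>2^k\}$, which yields $f=\sum_{k}\sum_i \lambda_{k,i}a_{k,i}$ with $\lambda_{k,i}\approx 2^k\|\chi_{B_{k,i}}\|_X$. Because $f$ is a finite sum of $L^q$ atoms, it is compactly supported, lies in $L^q$, and has vanishing moments. Consequently $\mathcal{M}^0_N f$ decays like $|x|^{-(n+d+1)}$ outside a large ball $B(0,R)$.

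Fix $k'$ with $2^{k'}\approx \sup_{|x|>R}\mathcal{M}^0_N f(x)$. For $k>k'$ we have $\Omega_k\subset B(0,R)$. Write $f=h+\ell$, where $h$ collects the terms with $k>k'$ and $\ell$ collects those with $k\le k'$.

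The main obstacle is making $h$ a finite sum while keeping the $L^q$ case valid. The tail in $k$ converges in $L^q$ because $f\in L^q$ and the decomposition converges there. So I would truncate: take the terms with $k'<k\le K$, and put the remainder into a single multiple of one atom supported on $B(0,R)$. Its $L^q$ norm is small, which is what requires $q<\infty$. Its coefficient is controlled by $\|\chi_{B(0,R)}\|_X^{-1}$ times the remainder's $L^q$ norm times $|B(0,R)|^{-1/q}$.

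Likewise, $\ell$ is supported in $B(0,R)$, has vanishing moments, and satisfies $\|\ell\|_\infty\lesssim 2^{k'}$. Hence $\ell$ is a multiple of one $(X,\infty,d)$-atom, and so of one $(X,q,d)$-atom by Remark \ref{infinite atom}. Its coefficient is $\lesssim 2^{k'}\|\chi_{B(0,R)}\|_X$. This is bounded by $\|\mathcal{M}^0_Nf\|_X$ because $\mathcal{M}^0_N f\gtrsim 2^{k'}$ on $B(0,R)$ by the choice of $k'$.

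Finally, the quasi-norm of the decomposition,
\[
\Big\|\Big\{\sum_{k,i}2^{ks}\chi_{B_{k,i}}\Big\}^{1/s}\Big\|_X ,
\]
is bounded by $\|\mathcal{M}^0_Nf\|_X$. The reason is that the balls $B_{k,i}$ have bounded overlap and $\sum_k 2^{ks}\chi_{\Omega_k}\lesssim(\mathcal{M}^0_Nf)^s$; the dilated balls are absorbed using (\ref{A1}).
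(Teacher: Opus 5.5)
The paper gives no proof of this statement; it is quoted from \cite[Theorem 1.10]{Yan}. Your outline is the Meda--Sj\"ogren--Vallarino scheme used there: atomic reconstruction via (\ref{A1}) and (\ref{A2}) for the easy direction, and a Calder\'on--Zygmund decomposition split at a level $k'$ for the converse. The easy direction is fine. The hard direction has a gap at the one step that carries the theorem, namely the bound on the coefficient of $\ell$.

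You claim $\mathcal{M}^0_N f\gtrsim 2^{k'}$ on $B(0,R)$ ``by the choice of $k'$''. But $k'$ is defined through the values of $\mathcal{M}^0_N f$ \emph{outside} $B(0,R)$, and it says nothing about the values inside. The decay $|x|^{-(n+d+1)}$ does not help either, because its constant involves $\|f\|_{L^1}$ and the support radius. That constant is not controlled by $\|\mathcal{M}^0_N f\|_X$ in general; think of a single tiny atom when $X=L^p$, $p<1$. So $2^{k'}\|\chi_{B(0,R)}\|_X\lesssim\|f\|_{H_X(\mathbb{R}^n)}$ is left unproved.

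The missing ingredient is a comparison lemma. If $\supp f\subset B=B(x_0,r)$, then $\mathcal{M}^0_N f(x)\le C\inf_{y\in B}\mathcal{M}^0_N f(y)$ for all $x\notin 8B$. Hence $\sup_{x\notin 8B}\mathcal{M}^0_N f(x)\le C\|\chi_B\|_X^{-1}\|f\|_{H_X(\mathbb{R}^n)}$. This is \cite[Lemma 2.4]{Yan}, the estimate (\ref{pointwise grand max}) that the paper uses in Lemma \ref{HX finito 2}.

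To prove it, fix $y\in B$, $\phi\in\mathcal{F}_N$ and $t>0$, and put $v=(x-y)/t$. You want $\phi_t(x-\cdot)=\psi_t(y-\cdot)$ on $B$ with $\|\psi\|_{\mathcal{S}(\mathbb{R}^n),N}\le C$.
\begin{itemize}
\item If $t\ge|x-x_0|/8$, take $\psi=\phi(\cdot+v)$; then $|v|\le 9$.
\item Otherwise, multiply $\phi(\cdot+v)$ by a cutoff equal to $1$ on $B(y/t,r/t)$. Use scale $r/t$ when $t\le r$, and scale $1$ when $r<t<|x-x_0|/8$.
\end{itemize}
In the second case the cutoff's derivatives are $O(1)$. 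On its support you have $|w|\lesssim|x-x_0|/t$, while $|\partial^\beta\phi(w+v)|\lesssim(1+|x-x_0|/t)^{-N}$, so the weight $(1+|w|)^N$ is absorbed.

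Accordingly, $R$ should be $8r_0$ for a ball $B(0,r_0)\supset\supp f$, not merely ``large''. The lower bound should be used only on $B(0,r_0)$. You then finish with $\|\chi_{B(0,8r_0)}\|_X\lesssim\|\chi_{B(0,r_0)}\|_X$, which follows from (\ref{A1}) since $\chi_{8B}\le 8^{n/\theta}M^{(\theta)}\chi_B$.

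There is also a smaller repair in the truncation of $h$. The terms with $k'<k\le K$ do not form a finite sum, because each $\Omega_k$ is covered by infinitely many Whitney balls. Truncate over finite sets of pairs $(k,i)$ instead. This works because
\[
\sum_{k>k'}\sum_i|\lambda_{k,i}a_{k,i}|\lesssim\sum_{k>k'}2^k\chi_{\Omega_k}\lesssim\mathcal{M}^0_N f\,\chi_{B(0,R)}\in L^q,
\]
using $\mathcal{M}^0_Nf\lesssim Mf$ and $q>1$. So the double series converges unconditionally in $L^q$, and every remainder is supported in $B(0,R)$ with vanishing moments up to order $d$. With these two points supplied, your argument becomes the proof of \cite[Theorem 1.10]{Yan}.
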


By \cite[Theorems 3.6 and 3.7]{sawa} (which also hold with balls instead of cubes), and \cite[Corollary 3.11 - (ii)]{sawa}, we obtain the following result.

\begin{corollary} \label{dense set}
Let $s$, $q$ and $d$ be as in Definition \ref{atom} and let $X$ be a strictly $s$-convex ball quasi-Banach function spaces such that the quasi-norm of $X$ is absolutely continuous. Then $H^{X, q, d}_{fin}(\mathbb{R}^n) \subset H_{X}(\mathbb{R}^n)$ densely. 
\end{corollary}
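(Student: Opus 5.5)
The plan is to obtain Corollary \ref{dense set} by combining two cited facts. The first is the infinite atomic decomposition of $H_X(\mathbb{R}^n)$ from \cite[Theorems 3.6 and 3.7]{sawa}. The second is the statement in \cite[Corollary 3.11 - (ii)]{sawa} that, under absolute continuity of the quasi-norm, such decompositions converge in the $H_X$ quasi-norm. Throughout, $X$ satisfies (\ref{A1}) and (\ref{A2}) with the parameters $\theta<s\le 1$ and $q$ of Definition \ref{atom}. Since $H^{X,q,d}_{fin}(\mathbb{R}^n)\subset H_X(\mathbb{R}^n)$ is already given by Theorem \ref{equiv quasi-norm} (for $q<\infty$; for $q=\infty$ we use Remark \ref{infinite atom} to reduce to a finite $q$), only density needs proof.

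\textbf{Step 1 (atomic decomposition).} Fix $f\in H_X(\mathbb{R}^n)$. By \cite[Theorem 3.7]{sawa}, which is valid with balls in place of cubes, there are $(X,\infty,d)$-atoms $a_j$ supported in balls $B_j$, and coefficients $\lambda_j\ge 0$, such that $f=\sum_j\lambda_j a_j$ in $\mathcal{S}'(\mathbb{R}^n)$ and
\[
\Big\|\Big\{\sum_j\Big(\tfrac{\lambda_j}{\|\chi_{B_j}\|_X}\Big)^s\chi_{B_j}\Big\}^{1/s}\Big\|_X\lesssim\|f\|_{H_X(\mathbb{R}^n)}.
\]
Each such atom is also an $(X,q,d)$-atom by Remark \ref{infinite atom}. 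Consequently every partial sum $f_N:=\sum_{j\le N}\lambda_ja_j$ belongs to $H^{X,q,d}_{fin}(\mathbb{R}^n)$.

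\textbf{Step 2 (convergence in $H_X$).} We must show $\|f-f_N\|_{H_X}\to 0$. The tail $f-f_N=\sum_{j>N}\lambda_ja_j$ is itself an atomic sum. Applying the reconstruction estimate \cite[Theorem 3.6]{sawa} to this tail (the step that uses (\ref{A2})) gives
\[
\|f-f_N\|_{H_X}\lesssim\Big\|\Big\{\sum_{j>N}\Big(\tfrac{\lambda_j}{\|\chi_{B_j}\|_X}\Big)^s\chi_{B_j}\Big\}^{1/s}\Big\|_X .
\]
Set $G:=\big\{\sum_j(\lambda_j/\|\chi_{B_j}\|_X)^s\chi_{B_j}\big\}^{1/s}\in X$ and $G_N:=\big\{\sum_{j>N}(\lambda_j/\|\chi_{B_j}\|_X)^s\chi_{B_j}\big\}^{1/s}$, so that $G_N\downarrow 0$ pointwise and $G_N\le G$.

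\textbf{Step 3 (the main obstacle).} The difficulty is to conclude $\|G_N\|_X\to 0$. Domination alone does not give this in a quasi-Banach lattice; it needs a dominated convergence property, and this is where absolute continuity and strict $s$-convexity enter. The idea is to pass to $X^{1/s}$, which is a Banach lattice (strict $s$-convexity makes $\|\cdot\|_{X^{1/s}}$ a genuine norm), and note that $\|G_N\|_X^s=\|G_N^s\|_{X^{1/s}}$. Absolute continuity of $X$ transfers to $X^{1/s}$. Given $\varepsilon>0$, choose a large ball $B$ and a level $\delta>0$, and split
\[
G_N^s\le G^s\chi_{B^c}+G^s\chi_{\{G^s>1/\delta\}}+G_N^s\chi_{B\cap\{G^s\le 1/\delta\}} .
\]
The first two terms are small in $X^{1/s}$, uniformly in $N$, by absolute continuity, since $B^c$ and $\{G^s>1/\delta\}$ shrink to null sets as $B$ grows and $\delta\to 0$. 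For the third term, choose the sets $E_N=\{G_N^s>\eta\}\cap B$, which decrease to the empty set. Then
\[
G_N^s\chi_{B\cap\{G^s\le 1/\delta\}}\le \eta\,\chi_B+\delta^{-1}\chi_{E_N},
\]
and both pieces are small in $X^{1/s}$: the first by choosing $\eta$ small (using (P6)), the second by absolute continuity as $N\to\infty$. This gives $\|f-f_N\|_{H_X}\to 0$, which is the content of \cite[Corollary 3.11 - (ii)]{sawa} and proves the density.
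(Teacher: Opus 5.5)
Your proposal is correct and follows essentially the paper's route: the paper proves Corollary~\ref{dense set} only by citing \cite[Theorems 3.6 and 3.7]{sawa} together with \cite[Corollary 3.11 - (ii)]{sawa}, and your Steps 1--3 unpack those citations (decomposition, reconstruction applied to the tail $\sum_{j>N}\lambda_j a_j$, and dominated convergence from absolute continuity). Two minor remarks: Step 3 combines only finitely many terms, so the quasi-triangle inequality of $X^{1/s}$ already suffices and strict $s$-convexity is not what makes it work; and for $q=\infty$ it is cleaner to take the inclusion directly from \cite[Theorem 3.6]{sawa}, because passing to a finite $q$ via Remark~\ref{infinite atom} and Theorem~\ref{equiv quasi-norm} would require (\ref{A2}) at that finite $q$, which is not among the hypotheses.
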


\section{Estimates on weighted Hardy type spaces} \label{weighted estimates}

A weight is a non-negative locally integrable function on $\mathbb{R}^{n}$ that takes values in $(0, \infty)$ almost everywhere, i.e. : the weights are allowed to be zero or infinity only on a set of Lebesgue measure zero.

Given a weight $w$ and $0 < p < \infty$, we denote by $L^{p}(w)$ the spaces of all functions $f$ defined on $\mathbb{R}^n$ satisfying 
$\| f \|_{L^{p}(w)} := (\int_{\mathbb{R}^{n}} |f(x)|^{p} w(x) dx)^{1/p} < \infty$ . When $p=\infty$, we have that 
$L^{\infty}(w) = L^{\infty}(\mathbb{R}^{n})$ with $\| f \|_{L^{\infty}(w)} := \| f \|_{L^{\infty}}$. If $E$ is a measurable set, we use the notation $w(E) = \int_{E} w(x) dx$.

We say that a weight $w \in \mathcal{A}_1$ if there exists $C >  0$ such that
\begin{equation*}
M(w)(x) \leq C w(x), \,\,\,\,\, a.e. \, x \in \mathbb{R}^{n}, 
\end{equation*}
the best possible constant is denoted by $[w]_{\mathcal{A}_1}$. Equivalently, a weight $w \in \mathcal{A}_1$ if there exists $C >  0$ such that for every ball $B$
\begin{equation}
\frac{1}{|B|} \int_{B} w(x) dx \leq C \, ess\inf_{x \in B} w(x). \label{A1condequiv}
\end{equation}

\begin{remark} \label{wr A1}
If $w \in \mathcal{A}_1$ and $0 < r < 1$, then by H\"older inequality we have that $w^{r} \in \mathcal{A}_1$.
\end{remark}

\begin{remark} \label{Dn}
If $w \in \mathcal{A}_1$, then 
\[
w(B(x,tr)) \leq [w]_{\mathcal{A}_1} t^n w(B(x,r)), \,\,\, \forall t \geq 1.
\]
So, $w$ satisfies the doubling $D_n$ condition (see \cite[p. 2, (vii)]{tor}).
\end{remark}

\begin{remark} \label{O de n}
The orthogonal group $\mathcal{O}(n)$ induces an action on functions by $f_{A}(x) = f(A^{-1}x)$, where $A \in \mathcal{O}(n)$.
It is easy to check that $w \in \mathcal{A}_1$ if and only if $w_{A} \in \mathcal{A}_1$ for all $A \in \mathcal{O}(n)$. Therefore, the space of weights $\mathcal{A}_1$ is preserved by the action of $\mathcal{O}(n)$.
\end{remark}

For $1 < p < \infty$, we say that a weight $w \in \mathcal{A}_p$ if there exists $C> 0$ such that for every ball $B$
\[
\left( \frac{1}{|B|} \int_{B} w(x) dx \right) \left( \frac{1}{|B|} \int_{B} [w(x)]^{-\frac{1}{p-1}} dx \right)^{p-1} \leq C.
\]
It is well known that $\mathcal{A}_{p_1} \subset \mathcal{A}_{p_2}$ for all $1 \leq p_1 < p_2 < \infty$. 

Given $1 < p \leq q < \infty$, we say that a weight $w \in \mathcal{A}_{p,q}$ if there exists $C> 0$ such that for every ball $B$
\[
\left( \frac{1}{|B|} \int_{B} [w(x)]^{q} dx \right)^{1/q} \left( \frac{1}{|B|} \int_{B} [w(x)]^{-p'} dx \right)^{1/p'} \leq C < \infty.
\]
For $p=1$, we say that a weight $w \in \mathcal{A}_{1,q}$ if there exists $C> 0$ such that for every ball $B$
\[
\left( \frac{1}{|B|} \int_{B} [w(x)]^{q} dx \right)^{1/q} \leq C \, ess\inf_{x \in B} w(x).
\]
When $p=q$, this definition is equivalent to $w^{p} \in \mathcal{A}_{p}$.

A weight $w$ satisfies the reverse H\"older inequality with exponent $s > 1$, denoted by $w \in RH_{s}$, if there exists $C> 0$ such that for every ball $B$,
\[
\left(\frac{1}{|B|} \int_{B} [w(x)]^{s} dx \right)^{\frac{1}{s}} \leq C \frac{1}{|B|} \int_{B} w(x) dx;
\]
the best possible constant is denoted by $[w]_{RH_s}$. 

\begin{remark} \label{RHs RHt}
We observe that if $w \in RH_s$, then by H\"older's inequality, $w \in RH_t$ for all $1 < t < s$, and
$[w]_{RH_t} \leq [w]_{RH_s}$.
\end{remark}

The following result was proved for cubes in \cite{Lerner}. However, since $w \in \mathcal{A}_1$ is doubling, the Lemma holds for balls with the same exponent.

\begin{lemma} \label{A1 y RHs}
Given $w \in \mathcal{A}_1$, then $w \in RH_s$, where $s= 1 + (2^{n+1} [w]_{\mathcal{A}_1})^{-1}$.
\end{lemma}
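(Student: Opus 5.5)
The plan is to prove the reverse Hölder inequality first on cubes, following Lerner's sharp argument, and then pass to balls using the doubling property in Remark \ref{Dn}. Passing to balls only changes the multiplicative constant, not the exponent.

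\emph{Cube version.} Fix a cube $Q$ and let $M^d_Q$ be the dyadic maximal operator restricted to the dyadic subcubes of $Q$. By the Lebesgue differentiation theorem, $w\le M^d_Q w$ a.e.\ on $Q$. The $\mathcal{A}_1$ condition gives $M^d_Q w\le [w]_{\mathcal{A}_1} w$ a.e.\ on $Q$. For $\lambda> w_Q:=|Q|^{-1}\int_Q w$, the Calderón–Zygmund decomposition at height $\lambda$ writes $\{M^d_Q w>\lambda\}$ as a disjoint union of maximal dyadic cubes $Q_j$ with $\lambda< w_{Q_j}\le 2^n\lambda$. Hence
\[
w(\{M^d_Q w>\lambda\})\le 2^n\lambda\,|\{M^d_Q w>\lambda\}|.
\]
Fix $\delta>0$. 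By the layer-cake formula, splitting at $\lambda=w_Q$,
\[
\int_Q (M^d_Qw)^{\delta}w \le w_Q^{\delta}\,w(Q)+\delta 2^n\int_{w_Q}^\infty \lambda^{\delta}|\{M^d_Qw>\lambda\}|\,d\lambda
\le w_Q^{\delta}w(Q)+\frac{\delta 2^n}{1+\delta}\int_Q (M^d_Qw)^{1+\delta}.
\]
Since $M^d_Qw\le[w]_{\mathcal{A}_1}w$, the last integral is at most $[w]_{\mathcal{A}_1}\int_Q (M^d_Qw)^{\delta}w$. With $\delta=(2^{n+1}[w]_{\mathcal{A}_1})^{-1}$ the coefficient $\frac{\delta 2^n[w]_{\mathcal{A}_1}}{1+\delta}$ is at most $1/2$. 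That term can then be absorbed; this requires knowing the left side is finite, which one gets by first truncating $w$ by $\min(w,k)$ and letting $k\to\infty$. Using $w\le M^d_Qw$ once more,
\[
\int_Q w^{1+\delta}\le \int_Q (M^d_Qw)^{\delta}w\le 2\,w_Q^{\delta}w(Q).
\]
This is $RH_{1+\delta}$ on cubes with constant $2^{1/(1+\delta)}\le 2$.

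\emph{Transfer to balls.} Given a ball $B=B(x_0,r)$, let $Q$ be the cube centered at $x_0$ with side $2r$, so that $B\subset Q\subset\sqrt n B$. Then
\[
\frac1{|B|}\int_B w^{s}\le \frac{|Q|}{|B|}\cdot\frac1{|Q|}\int_Q w^{s}\lesssim (w_Q)^{s}.
\]
By Remark \ref{Dn}, $w(Q)\le w(\sqrt nB)\le [w]_{\mathcal{A}_1}n^{n/2}w(B)$, so $w_Q\lesssim w_B$. This gives $w\in RH_s$ for balls with the same $s$ and a constant depending only on $n$ and $[w]_{\mathcal{A}_1}$.

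\emph{Main obstacle.} The delicate point is the bookkeeping of the $\mathcal{A}_1$ constant, since the paper defines $[w]_{\mathcal{A}_1}$ through the ball maximal operator $M$. The dyadic bound $M^d_Qw\le C w$ comes out with a constant that differs from $[w]_{\mathcal{A}_1}$ by a dimensional factor, because averages over cubes must be compared with averages over circumscribed balls. I would first try to run the absorption argument with the exact constant obtained on cubes, as Lerner states it. If only a dimensional loss appears, the exponent would become $1+(c_n[w]_{\mathcal{A}_1})^{-1}$. This is harmless for the later use in the paper, where only some explicit $s>1$ depending on $n$ and $[w]_{\mathcal{A}_1}$ is needed, and via Remark \ref{RHs RHt} any smaller exponent $t\in(1,s)$ also works.
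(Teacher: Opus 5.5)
Your route is the paper's route. The paper cites Lerner's result for cubes (whose proof is the dyadic Calder\'on--Zygmund / layer-cake / absorption argument you reproduce, including the truncation needed before absorbing), and then invokes the doubling property of Remark~\ref{Dn} to pass to balls with the same exponent. Your cube argument and your cube-to-ball transfer are both correct.

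The obstacle you point out is genuine, and the paper's one-line justification does not address it either.
\begin{itemize}
\item Doubling transfers $RH_s$ from cubes to balls with the same number $s$. But in the cube argument $s$ is dictated by the constant $K$ in $M^d_Q w\le Kw$, i.e.\ by the $\mathcal{A}_1$ constant over cubes, while the paper defines $[w]_{\mathcal{A}_1}$ through balls.
\item Comparing each dyadic cube with its circumscribed ball gives only $K\le c_n[w]_{\mathcal{A}_1}$, with $c_n=|B(0,\sqrt n/2)|$.
\item With $\delta=(2^{n+1}[w]_{\mathcal{A}_1})^{-1}$, your absorption coefficient becomes $c_n/(2(1+\delta))$. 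For $n\le 2$ one has $c_n<2$, so this is still $<1$ and the stated exponent survives with a worse constant. For every $n\ge 3$ it exceeds $1$ (already $c_3=\sqrt3\,\pi/2\approx 2.72$, and $c_n$ grows exponentially).
\end{itemize}
So for $n\ge3$, what you and the paper actually establish is the lemma with $[w]_{\mathcal{A}_1}$ read as the cube constant. Equivalently, with the paper's ball-defined constant you get $s=1+(2^{n+1}c_n[w]_{\mathcal{A}_1})^{-1}$.

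Your remark that this loss is harmless downstream needs a qualification. In Theorem~\ref{main thm}\,\textbf{b)} one has $[\mathcal{R}g]_{\mathcal{A}_1}\le 2\|M\|_{(X^{1/p_0})'}$. The lemma as stated therefore needs $q\ge p_0(1+2^{n+2}\|M\|_{(X^{1/p_0})'})$, so the factor $2^{n+3}$ in (\ref{qb}) leaves only a factor $2$ of room. Since $c_n>2$ for $n\ge 3$, hypothesis (\ref{qb}) would have to be strengthened, e.g.\ with $2^{n+2}c_n$ in place of $2^{n+3}$. With that change the argument of the theorem goes through, but not with (\ref{qb}) verbatim.
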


Given a weight $w \in \mathcal{A}_{1}$ and $0 < p \leq 1$, the weighted Hardy space $H^{p}(w)$ consists of all tempered distributions $f$ on 
$\mathbb{R}^n$ such that
\[
\| f \|_{H^{p}(w)} := \| \mathcal{M}^{0}_{N}f \|_{L^{p}(w)} = \left( \int_{\mathbb{R}^{n}}  
[\mathcal{M}^{0}_{N} f(x)]^{p} w(x) dx \right)^{1/p} < \infty,
\]
where $\mathcal{M}^{0}_{N}$ is the grand maximal given in (\ref{grand max 0}) and $N \geq \lfloor n (p^{-1}-1) \rfloor$.

\

Next, we give an atomic reconstruction in weighted Hardy spaces considering $X$-atoms.

\begin{lemma} \label{X atom Hw}
Let $X$ be an ball quasi-Banach function space and $0 < p_0 \leq 1$. Suppose $\{ a_j \}_{j=1}^{\infty}$ is a sequence of 
$(X, q, d)$-atoms supported, respectively, in the balls $\{ B_j \}_{j=1}^{\infty}$, $\{ \lambda_j \}_{j=1}^{\infty}$ is a non-negative sequence, and $w \in \mathcal{A}_1 \cap RH_{(q/p_0)'}$. If
\[
\left\|  \sum_{j=1}^{\infty} \frac{\lambda_j}{\| \chi_{B_j} \|_{X}} \chi_{B_j} \right\|_{L^{p_0}(w)} < \infty,
\]
then the series $f = \sum_j \lambda_j a_j$ converges in $H^{p_0}(w)$, and
\[
\| f \|_{H^{p_0}(w)} \leq C \left\|  \sum_{j=1}^{\infty} \frac{\lambda_j}{\| \chi_{B_j} \|_{X}} \chi_{B_j} \right\|_{L^{p_0}(w)},
\]
where the constant $C$ does not depend on $f$, $\{ a_j \}_{j=1}^{\infty}$ and $\{ \lambda_j \}_{j=1}^{\infty}$.
\end{lemma}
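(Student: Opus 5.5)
The plan is to show that each rescaled $X$-atom is, up to a fixed constant, a classical weighted $(p_0, q)$-atom for $H^{p_0}(w)$. Then the lemma follows from the standard weighted atomic reconstruction together with a pointwise estimate on the grand maximal function of a single atom.

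\emph{Step 1: normalization.} Set
\[
b_j := \frac{\| \chi_{B_j}\|_X}{w(B_j)^{1/p_0}}\, a_j ,
\qquad
\mu_j := \frac{\lambda_j\, w(B_j)^{1/p_0}}{\| \chi_{B_j}\|_X},
\]
so that $\lambda_j a_j = \mu_j b_j$. Each $b_j$ is supported in $B_j$ and satisfies $b_j \perp \mathcal{P}_d$. Its size is controlled by
\[
\| b_j \|_{L^q} \leq \frac{|B_j|^{1/q}}{w(B_j)^{1/p_0}} .
\]

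\emph{Step 2: maximal function of one atom.} For a single $b = b_j$ with ball $B = B(x_0, r)$, split $\mathbb{R}^n$ into $2B$ and its complement.

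On $2B$, use Hölder's inequality with exponents $q/p_0$ and $(q/p_0)'$, together with the $L^q$-boundedness of $\mathcal{M}^0_N$ ($q > 1$) and $w \in RH_{(q/p_0)'}$. Doubling of $w$ (Remark \ref{Dn}) then gives
\[
\int_{2B} (\mathcal{M}^0_N b)^{p_0}\, w
\leq \| \mathcal{M}^0_N b \|_{L^q}^{p_0}\, \Big( \int_{2B} w^{(q/p_0)'} \Big)^{1/(q/p_0)'}
\lesssim \frac{|B|^{p_0/q}}{w(B)}\, |B|^{1/(q/p_0)'-1}\, w(2B)
\lesssim 1 .
\]

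Off $2B$, Taylor-expand the test function to order $d$ and use the vanishing moments. This gives, for $x \notin 2B$,
\[
\mathcal{M}^0_N b(x) \lesssim \|b\|_{L^1}\, \frac{r^{d+1}}{|x-x_0|^{n+d+1}}
\lesssim \frac{1}{w(B)^{1/p_0}}\, \big( M\chi_B(x) \big)^{(n+d+1)/n}.
\]
Here $N$ is large and we use $\|b\|_{L^1} \leq |B|^{1-1/q}\|b\|_{L^q}$.

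\emph{Step 3: summing the atoms.} Since $\mathcal{M}^0_N$ is sublinear and $p_0 \le 1$, we have
\[
\big(\mathcal{M}^0_N \textstyle\sum_j \mu_j b_j\big)^{p_0} \le \sum_j \mu_j^{p_0} (\mathcal{M}^0_N b_j)^{p_0}.
\]

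For the far parts, we need $(n+d+1)p_0/n > 1$. This follows from $d \geq \lfloor n(1/\theta - 1)\rfloor$ provided $p_0 \ge \theta$. I expect this compatibility between $d$ and $p_0$ to be implicit in how the lemma is applied; if needed, I would require $d$ large relative to $p_0$. Granting it, Fefferman–Stein type bounds for the powered maximal operator $M^{(n/(n+d+1))}$ on $L^{p_0}(w)$ hold, since $w \in \mathcal{A}_1 \subset \mathcal{A}_{(n+d+1)p_0/n}$. This controls
\[
\Big\| \Big(\sum_j \mu_j w(B_j)^{-1/p_0} (M\chi_{B_j})^{(n+d+1)/n}\Big) \Big\|_{L^{p_0}(w)}
\]
by $\| \sum_j \lambda_j \|\chi_{B_j}\|_X^{-1} \chi_{B_j} \|_{L^{p_0}(w)}$.

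For the local parts, the key point is that the sum $\sum_j \mu_j^{p_0}$ of localized pieces is not directly acceptable. I would instead apply the standard duality argument (as in Sawa–Ho–Yang–Yang or Nakai–Sawano). Pair $\sum_j \mu_j \chi_{2B_j}\, w(B_j)^{-1/p_0}\mathcal{M}^0_N b_j$ against $g \in L^{(q/p_0)'}$-type test functions. Use the $L^q$ size of each $\mathcal{M}^0_N b_j$ on $2B_j$, the $RH$ condition, and the boundedness of $M^{((q/p_0)')}$ on $L^{(1/p_0)'}(w)$-type spaces. This bounds the local part by the same quantity $\|\sum_j \lambda_j\|\chi_{B_j}\|_X^{-1}\chi_{B_j}\|_{L^{p_0}(w)}$. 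This duality step is the main obstacle: one must choose the correct exponent to dualize in $L^{p_0}(w)$ given $p_0 \le 1$. I would first pass to $L^{1}(w)$ for the $p_0$-th powers and then use the reverse Hölder exponent $(q/p_0)'$ to absorb the $L^{q/p_0}$ size.

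\emph{Step 4: convergence.} Apply the estimate to tails $\sum_{j>k}$. By dominated convergence in $L^{p_0}(w)$ (the hypothesis gives a finite majorant), the tails tend to zero. Hence the partial sums are Cauchy in the complete space $H^{p_0}(w)$. They converge in $\mathcal{S}'$ to $f$, so $f \in H^{p_0}(w)$ with the stated bound.
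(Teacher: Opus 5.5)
Your far-field estimate is sound. With $\gamma=(n+d+1)/n$ and $p_0\gamma>1$, the Fefferman--Stein inequality in $L^{p_0\gamma}(w)$, using $w\in\mathcal{A}_1\subset\mathcal{A}_{p_0\gamma}$, does produce the $\ell^1$-type quantity $\|\sum_j\lambda_j\|\chi_{B_j}\|_X^{-1}\chi_{B_j}\|_{L^{p_0}(w)}$.

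\textbf{The gap is the local part of Step 3.} For $p_0<1$ the right-hand side of the lemma is an $\ell^1$-sum inside $L^{p_0}(w)$. It can be far smaller than $\sum_j\mu_j^{p_0}=\|\sum_j(\lambda_j/\|\chi_{B_j}\|_X)^{p_0}\chi_{B_j}\|_{L^1(w)}$. For example, take $K$ atoms on one ball $B$ with $\lambda_j=\|\chi_B\|_X$: the two quantities are $K^{p_0}w(B)$ and $Kw(B)$.

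Your concrete remedy is to pass to $p_0$-th powers in $L^1(w)$ and absorb the $L^{q/p_0}$ size via $RH_{(q/p_0)'}$. That is just the single-atom estimate summed through $(\sum_j x_j)^{p_0}\le\sum_j x_j^{p_0}$. It therefore returns exactly $\sum_j\mu_j^{p_0}$, the bound you yourself called unacceptable.

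The Sawano--Ho--Yang--Yang / Nakai--Sawano duality argument cannot close this either. It first lowers the power to some $\eta\le p_0$, so that $L^{p_0/\eta}(w)$ is a Banach space that can be dualized. Its output is then $\|(\sum_j(\lambda_j\|\chi_{B_j}\|_X^{-1})^{\eta}\chi_{B_j})^{1/\eta}\|_{L^{p_0}(w)}$. This is an $\ell^\eta$-sum, and it dominates the $\ell^1$-sum in the statement rather than being dominated by it. Since $L^{p_0}(w)$ itself cannot be dualized, no choice of exponents in that scheme gives the $\ell^1$ form. The pairing you sketch against $L^{(q/p_0)'}$ and $L^{(1/p_0)'}(w)$ test functions does not correspond to any legitimate duality for $L^{p_0}(w)$. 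As written, your proof establishes only the weaker inequality with $\|\sum_j(\lambda_j/\|\chi_{B_j}\|_X)^{p_0}\chi_{B_j}\|_{L^1(w)}^{1/p_0}$ on the right.

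\textbf{How the paper handles it.} The paper does not do this summation by hand. It uses Remark \ref{Jensen} to shrink $\theta$ so that $d\ge\lfloor n(p_0^{-1}-1)\rfloor$. It then rescales to unweighted-normalized atoms $\widetilde a_j=\|\chi_{B_j}\|_X a_j$, so that $\|\widetilde a_j\|_{L^q}\le|B_j|^{1/q}$, with coefficients $\widetilde\lambda_j=\lambda_j/\|\chi_{B_j}\|_X$. Finally it quotes the Str\"omberg--Torchinsky weighted reconstruction theorem (Theorem 1, pp.~111--112 of their book, with $s=1$). As the paper uses that theorem, it directly gives $\|\sum_j\widetilde\lambda_j\widetilde a_j\|_{H^{p_0}(w)}\lesssim\|\sum_j\widetilde\lambda_j\chi_{B_j}\|_{L^{p_0}(w)}$, so the $\ell^1$ control of the local pieces is imported rather than derived.

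To prove the lemma as stated, you need either that theorem or a genuinely new linearization idea for the local pieces. Your argument, with Step 3 simplified, does prove the weaker $\ell^{p_0}$ version. That version is all that the first inequality in the proof of Lemma \ref{HX finito 2} actually uses.
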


\begin{proof}
Since $w \in \mathcal{A}_1$, by Remark \ref{Dn}, $w$ satisfies the doubling $D_n$ condition. Now, by Remark \ref{Jensen}, the constant 
$\theta$ that appears in Definition \ref{atom} can be taken such that 
\[
\lfloor n (\theta^{-1} - 1) \rfloor \geq \lfloor n (p_0^{-1} - 1) \rfloor.
\]
Finally, we apply \cite[Theorem 1, on p. 111-112]{tor} with $s=1$, 
\[
\widetilde{a}_j = \| \chi_{B_j} \|_{X} \, a_j, \,\,\,\, \text{and} \,\,\,\, \widetilde{\lambda}_j = \| \chi_{B_j} \|_{X}^{-1} \lambda_j.
\]
This concludes the proof.
\end{proof}

Now, we introduce the set $H^{X, p_0, q, d}_{fin}(w)$, which will be used as a bridge to achieve our main result of Section \ref{resultados principales}. This set is similar to that considered in \cite[Section 7.3]{UrWang}.

\begin{definition}
Let $X$ be a ball quasi-Banach function space, $0 < p_0 \leq 1$, and let $q$ and $d$ be as in Definition \ref{atom}. Given 
$w \in \mathcal{A}_1$, define $H^{X, p_0, q, d}_{fin}(w)$ to be the set of all finite sums of $(X, q, d)$-atoms. For any 
$f \in H^{X, p_0, q, d}_{fin}(w)$, we set
\[
\| f \|_{H^{X, p_0, q, d}_{fin}(w)} := \inf \left\{ \left\| \sum_{j=1}^{N} \left( \frac{\lambda_j}{\| \chi_{B_j} \|_X} 
\right)^{p_0} \chi_{B_j} \right\|_{L^1(w)}^{1/p_0} : f = \sum_{j=1}^{N} \lambda_j a_{j}, \, \{ \lambda_j \}_{j=1}^{N} \subset [0, \infty) \right\},
\]
where the infimum is taken over all finite linear combinations of $f$ in terms of $(X, q, d)$-atoms $\{ a_j \}_{j=1}^{N}$ supported, respectively, in the balls $\{ B_j \}_{j=1}^{N}$.
\end{definition}

From this definition, it is clear that $H^{X, p_0, q, d}_{fin}(w) = H^{X, q, d}_{fin}(\mathbb{R}^n)$ as sets.

\begin{lemma} \label{HX finito 2}
Let $X$ be a ball quasi-Banach function space such that $X^{1/p_0}$ is a ball Banach function space for some $0 < p_0 \leq 1$, and 
let $q$ and $d$ be as in Definition \ref{atom}. If $w \in \mathcal{A}_1 \cap RH_{(q/p_0)'} \cap (X^{1/p_0})'$, then 
$H^{X, p_0, q, d}_{fin}(w) \subset H^{p_0}(w)$ and
\[
\| f \|_{H^{X, p_0, q, d}_{fin}(w)} \approx \| f \|_{H^{p_0}(w)},
\]
for all $f \in H^{X, p_0, q, d}_{fin}(w)$.
\end{lemma}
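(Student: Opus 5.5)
The plan is to prove the two inequalities separately. The upper bound comes directly from Lemma \ref{X atom Hw}. The lower bound rests on a finite atomic decomposition in the weighted space $H^{p_0}(w)$, whose atoms are then rescaled into $(X,q,d)$-atoms.

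\emph{Upper bound.} Let $f=\sum_{j=1}^N \lambda_j a_j$ with $(X,q,d)$-atoms $a_j$ supported in $B_j$ and $\lambda_j\ge 0$. Since $w$ is locally integrable and the sum is finite,
\[
\Big\|\sum_{j=1}^N \tfrac{\lambda_j}{\|\chi_{B_j}\|_X}\chi_{B_j}\Big\|_{L^{p_0}(w)}<\infty .
\]
So Lemma \ref{X atom Hw} applies, because $w\in\mathcal{A}_1\cap RH_{(q/p_0)'}$. Since $0<p_0\le 1$, we have pointwise $\big(\sum_j c_j\chi_{B_j}\big)^{p_0}\le \sum_j c_j^{p_0}\chi_{B_j}$. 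Hence
\[
\|f\|_{H^{p_0}(w)}\lesssim \Big\|\sum_{j=1}^N \Big(\tfrac{\lambda_j}{\|\chi_{B_j}\|_X}\Big)^{p_0}\chi_{B_j}\Big\|_{L^1(w)}^{1/p_0}.
\]
Taking the infimum over all representations gives $H^{X,p_0,q,d}_{fin}(w)\subset H^{p_0}(w)$ and $\|f\|_{H^{p_0}(w)}\lesssim \|f\|_{H^{X,p_0,q,d}_{fin}(w)}$. The hypothesis $w\in (X^{1/p_0})'$ is kept here only to guarantee that all quantities in the definition of $H^{X,p_0,q,d}_{fin}(w)$ are finite and comparable, e.g. $w(B)\lesssim \|\chi_B\|_{X}^{p_0}$ type bounds via the K\"othe duality of Lemma \ref{doble dual}.

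\emph{Lower bound.} Any $f\in H^{X,p_0,q,d}_{fin}(w)$ is compactly supported, lies in $L^q$, and satisfies $f\perp\mathcal{P}_d$. The goal is a \emph{finite} decomposition $f=\sum_{k=1}^K \mu_k b_k$ into weighted $(p_0,q,d)$-atoms with
\[
\operatorname{supp} b_k\subset B_k,\qquad \|b_k\|_{L^q}\le |B_k|^{1/q}w(B_k)^{-1/p_0},\qquad b_k\perp\mathcal{P}_d,
\]
and with $\sum_k|\mu_k|^{p_0}\lesssim \|f\|^{p_0}_{H^{p_0}(w)}$. I would obtain it by running the argument of \cite[Section 7.3]{UrWang} (Meda--Sj\"ogren--Vallarino/Bownik style) in the weighted setting, in three steps.
\begin{itemize}
\item Take the Calder\'on--Zygmund decomposition of $f$ at the levels $\{\mathcal{M}^0_N f>2^k\}$, as in the weighted atomic theory of \cite{tor}. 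This gives an infinite decomposition whose partial sums converge both in $H^{p_0}(w)$ and pointwise.
\item Use the compact support of $f$ to control the tail. Outside a fixed dilate $\widetilde B$ of the support ball, $\mathcal{M}^0_Nf$ is bounded by a quantity controlled by $\|f\|_{H^{p_0}(w)}$; this uses the doubling of $w$ (Remark \ref{Dn}).
\item Split the pieces into the finitely many at levels below a large threshold and the remaining high-level part. The high-level part lies in $L^q$ with support in $\widetilde B$, by $L^q$-boundedness of $\mathcal{M}^0_N$ ($q>1$). It is therefore a multiple of one extra atom, and its coefficient is controlled via $w\in RH_{(q/p_0)'}$, which converts the unweighted $L^q$ bound into the weighted normalization.
\end{itemize}
I expect this step to be the main obstacle, especially showing that the high-level remainder really is a single controlled atom rather than an infinite tail.

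\emph{Rescaling.} Absorb the signs of the $\mu_k$ into the $b_k$ and set
\[
a_k:=\frac{w(B_k)^{1/p_0}}{\|\chi_{B_k}\|_X}\,b_k,\qquad \lambda_k:=|\mu_k|\,\|\chi_{B_k}\|_X\,w(B_k)^{-1/p_0}.
\]
Then each $a_k$ is an $(X,q,d)$-atom, $f=\sum_k\lambda_k a_k$, and
\[
\Big\|\sum_k\Big(\tfrac{\lambda_k}{\|\chi_{B_k}\|_X}\Big)^{p_0}\chi_{B_k}\Big\|_{L^1(w)}=\sum_k|\mu_k|^{p_0}\lesssim\|f\|^{p_0}_{H^{p_0}(w)}.
\]
This gives $\|f\|_{H^{X,p_0,q,d}_{fin}(w)}\lesssim \|f\|_{H^{p_0}(w)}$ and completes the equivalence.
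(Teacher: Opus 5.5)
Your upper bound is correct and matches the paper's argument. The final rescaling is also a good idea: it is an exact correspondence between weighted atoms and $(X,q,d)$-atoms that preserves $\sum_k|\mu_k|^{p_0}$. So if you truncate in the weighted setting, the hypothesis $w\in (X^{1/p_0})'$ is not needed at all. The paper needs it only because it takes the remainder as an $X$-atom with coefficient $1$, which produces the term $w(\widetilde B)/\|\chi_{\widetilde B}\|_X^{p_0}$.

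The gap is in the finite weighted decomposition, which is the whole content of the lower bound. Your third step, as described, does not produce one. There are never finitely many pieces below a threshold: the levels run down to $k=-\infty$, the sets $\{\mathcal{M}^0_Nf>2^k\}$ are unbounded for small $k$, and every level carries infinitely many Whitney balls, most of them outside $\widetilde B$. The missing idea (Meda--Sj\"ogren--Vallarino; the function $h$ and constant $k_w$ in the paper's proof) is to collapse all levels with $2^k\lesssim w(B)^{-1/p_0}\|f\|_{H^{p_0}(w)}$ into a single function $h$. The bound from your second step forces $\{\mathcal{M}^0_Nf>2^k\}\subset\widetilde B$ for every higher level. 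Hence the high-level part $\ell$ is supported in $\widetilde B$, and so is $h=f-\ell$. Moreover $h$ has vanishing moments, and by doubling $\|h\|_{\infty}\lesssim\sum_{k\le k_0}2^k\lesssim w(B)^{-1/p_0}\|f\|_{H^{p_0}(w)}\lesssim w(\widetilde B)^{-1/p_0}\|f\|_{H^{p_0}(w)}$. So $h$ is one atom with coefficient $O(\|f\|_{H^{p_0}(w)})$.

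Conversely, the part above that level cannot be treated as one atom on $\widetilde B$ whose coefficient is controlled through $w\in RH_{(q/p_0)'}$. Its $L^q$ norm is at least $\|f\|_{L^q}-\|h\|_{L^q}$, and $\|f\|_{L^q}$ is not controlled by $\|f\|_{H^{p_0}(w)}$. For example, take $w\equiv1$ and $f$ the sum of two normalized atoms on balls of radius $\delta$ at unit distance. Then $\|f\|_{H^{p_0}}\lesssim1$, but that coefficient grows like $\delta^{n/q-n/p_0}\to\infty$. Reverse H\"older plays no role in this direction; it enters only through Lemma \ref{X atom Hw}.

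What works is an $f$-dependent truncation. Since $f\in L^q$ with $q>1$, the high-level series is dominated pointwise by $C\mathcal{M}^0_Nf\lesssim Mf\in L^q$, so it converges in $L^q$. Keep a finite set of its atoms; their coefficients are controlled by the Str\"omberg--Torchinsky estimate \cite{tor}. The remainder is supported in $\widetilde B$, has vanishing moments, and has $L^q$ norm as small as you wish, so it is an arbitrarily small multiple of one atom on $\widetilde B$. This gives $f=h+\ell_i+(\ell-\ell_i)$, exactly the paper's splitting. With it, your route is the paper's, except that you truncate before rescaling rather than after.
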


\begin{proof}
From Lemma \ref{X atom Hw}, it follows that $H^{X, p_0, q, d}_{fin}(w) \subset H^{p_0}(w)$, and there exists an universal constant 
$C > 0$ such that
\[ 
\| f \|_{H^{p_0}(w)} \leq C \| f \|_{H^{X, p_0, q, d}_{fin}(w)},
\]
for all $f \in H^{X, p_0, q, d}_{fin}(w)$. 

To obtain the opposite inequality, fix $f \in H^{X, p_0, q, d}_{fin}(w)$, then $f$ is supported on a ball $B = B(0, K)$ for some $K > 1$. 
Let $\widetilde{B} = B(0, 4K)$. We assume $\| f \|_{H^{p_0}(w)} = 1$, and observe that the space $L^{p_0}(w)$, with $w \in \mathcal{A}_1$, is a ball quasi-Banach function space such that the assumption (\ref{A1}) holds true for any $\theta, s \in (0,1]$, $\theta < s$, 
$p_0 \in (\theta, 1]$ and $w \in \mathcal{A}_{p_0/\theta}$ (it follows from \cite[Theorem 3.1 (b)]{John} applied on $L^{p_0/\theta}(w)$, with 
$r = s/\theta$ and $|f_j|^{\theta}$ instead of $f_k$). So, we can apply \cite[Lemma 2.4]{Yan} with $\widetilde{X} = L^{p_0}(w)$ to obtain 
\begin{equation} \label{pointwise grand max}
\mathcal{M}_{N}^{0} f(x) \leq C \| \chi_B \|_{L^{p_0}(w)}^{-1} = C w(B)^{-1/p_0}, \,\,\,\, \forall \, x \in \mathbb{R}^n \setminus B(0, 4K).
\end{equation}
Now, we will prove that there exists an universal constant $C>0$ such that  $\| f \|_{H^{X, p_0, q, d}_{fin}(w)} \leq C$, under the assumption 
$\| f \|_{H^{p_0}(w)} = 1$, and thus the proposition will be followed from the homogeneity of $\| \cdot \|_{H^{X, p_0, q, d}_{fin}(w)}$. Since this estimate is similar to the proof of \cite[Lemma 7.11]{UrWang}, we will give the main steps of its proof adapting them to our context. We have, by the proof of \cite[Theorem 1, on p. 118]{tor}, that
\[
f = \sum_{j,k}^{\infty} \lambda_{j,k} \| \chi_{B_{j,k}} \|_{X}^{-1} \| \chi_{B_{j,k}} \|_{X} a_{j,k} = \sum_{j,k}^{\infty} \lambda_{j,k} 
a_{j,k},
\]
where every $a_{j,k}$ is an $(X, \infty, d_X)$-atom supported on the ball $B_{j,k}$ (we recall that, by Remark \ref{Jensen}, $d_X$ can be 
taken such that $d_X \geq \lfloor n (p_0^{-1} - 1) \rfloor$, and so $\widetilde{a}_{j,k} := \| \chi_{B_{j,k}} \|_{X} \, a_{j,k}$ is 
an $(\infty, d_X)$-atom in the sense of \cite[p. 111]{tor}), the $\lambda_{j,k}$'s are non-negative scalars, and
\begin{equation} \label{ineq pesada}
\left\| \sum_{j,k}^{\infty} \left( \frac{\lambda_{j,k}}{\| \chi_{B_{j,k}} \|_{X}} \right)^{p_0} \chi_{B_{j,k}} \right\|_{L^1(w)}^{1/p_0} \leq
C \| f \|_{H^{p_0}(w)} = C.
\end{equation}
Therefore, by taking into account (\ref{pointwise grand max}), we can write (as in \cite[p. 484]{UrWang})
\[
f = h + \ell = k_{w}^{-1} (k_w h) + (\ell - \ell_i) + \sum_{F_i} \lambda_{j,k} a_{j,k}, 
\]
where $k_w$ is a constant given by
\[
k_w = \frac{c^{-1} w(B)^{1/p_0}}{\| \chi_{\widetilde{B}} \|_{X}},
\] 
$k_w h$ is an $X$-atom, $i$ is chosen large enough so that $(\ell - \ell_i)$ is  an $X$-atom, and the last sum is a finite sum of $X$-atoms. 
Then,
\[
\| f \|_{H^{X, p_0, q, d}_{fin}(w)}^{p_0} \leq k_{w}^{-p_0} \frac{w(\widetilde{B})}{\| \chi_{\widetilde{B}} \|_{X}^{p_0}} + 
\frac{w(\widetilde{B})}{\| \chi_{\widetilde{B}} \|_{X}^{p_0}} +
\left\| \sum_{j,k}^{\infty} \left( \frac{\lambda_{j,k}}{\| \chi_{B_{j,k}} \|_{X}} \right)^{p_0} \chi_{B_{j,k}} \right\|_{L^1(w)}.
\]
By Remark \ref{Dn}, the first term is bounded by a constant. By (\ref{ineq pesada}), the last term is bounded by the constant 
$C^{p_0}$. Finally, to bound the middle term, we apply \cite[Lemma 2.5]{Zhang}, with $X^{1/p_0}$, and obtain
\[
w(\widetilde{B}) = \int_{\widetilde{B}} w(x) dx \leq C \| \chi_{\widetilde{B}} \|_{X^{1/p_0}} \| w \|_{(X^{1/p_0})'} \leq 
C \| \chi_{\widetilde{B}} \|_{X}^{p_0}.
\]
Thus,
\[
\| f \|_{H^{X, p_0, q, d}_{fin}(w)} \leq C,
\]
as we wished to prove.
\end{proof}

\begin{lemma} \label{T atom estim}
Let $X$ be a ball quasi-Banach function space and let $0 < p_0 \leq 1$. For $0 \leq \alpha < n$ and 
$m \in \mathbb{N} \cap \left(1 - \frac{\alpha}{n}, +\infty \right)$, let $T_{\alpha, m}$ be the operator defined by (\ref{T}) and 
let $a(\cdot)$ be a $(X, q/p_0, d)$-atom supported on a ball $B$.\\
\textbf{a)} If $0 < \alpha < n$, $w \in \mathcal{A}_1$ and $q > \frac{n p_0}{\alpha}$, then for $\frac{1}{q_0} = \frac{1}{p_0} - \frac{\alpha}{n}$
\[
\int_{\mathbb{R}^{n}} |T_{\alpha, m} a (x)|^{q_0} w(x) dx \leq C |B|^{\frac{\alpha}{n}q_0} \| \chi_{B} \|_{X}^{-q_0} \sum_{i=1}^{m} 
w_{A_{i}^{-1}}(B),
\]
\textbf{b)} If $\alpha=0$ and $w \in \mathcal{A}_1 \cap RH_{(q/p_0)'}$, then
\[
\int_{\mathbb{R}^{n}} |T_{0, m} a (x)|^{p_0} w(x) dx \leq C \| \chi_{B} \|_{X}^{-p_0} \sum_{i=1}^{m} w_{A_{i}^{-1}}(B),
\]
\end{lemma}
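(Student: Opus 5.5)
Fix an $(X,q/p_0,d)$-atom $a$ supported in $B=B(x_0,r)$, so $\|a\|_{L^{q/p_0}}\le |B|^{p_0/q}\|\chi_B\|_X^{-1}$ and $a\perp\mathcal{P}_d$. The plan is to split $\mathbb{R}^n$ into a local part near the rotated balls $A_iB=B(A_ix_0,r)$ and a far part. Set $E=\bigcup_{i=1}^m B(A_ix_0,2r)$. The two regions are handled by different tools.

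\emph{Local part.} On $E$ we use only size. For part a), the pointwise bound from the introduction gives $|T_{\alpha,m}a(x)|\lesssim\sum_j I_\alpha(|a|)(A_j^{-1}x)$. For $x\in B(A_ix_0,2r)$ we have $A_i^{-1}x\in 2B$. We apply H\"older with exponent $t=\frac{q}{p_0}\cdot$(something) to $\int_{2B}|I_\alpha a(y)|^{q_0}w(A_iy)\,dy$. Take $1/\tilde q=p_0/q-\alpha/n>0$, which holds since $q>np_0/\alpha$; then $I_\alpha:L^{q/p_0}\to L^{\tilde q}$. If $\tilde q>q_0$, H\"older gives
\[
\int_{2B}|I_\alpha a|^{q_0}w_{A_i^{-1}}\le \|I_\alpha a\|_{\tilde q}^{q_0}\Big(\int_{2B}w_{A_i^{-1}}^{(\tilde q/q_0)'}\Big)^{1/(\tilde q/q_0)'} .
\]
This uses a reverse H\"older property of $w_{A_i^{-1}}\in\mathcal{A}_1$ (Remark~\ref{O de n} and Lemma~\ref{A1 y RHs}), together with doubling. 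The difficulty is that the RH exponent is fixed, so we need $(\tilde q/q_0)'$ small. In part a) the hypothesis does not include $RH$, so this is the main obstacle. The fix I would try is a weak-type/Kolmogorov route, or simply using $T_{\alpha,m}$ bounded $L^{p}(w^{?})$-type weighted Riesz estimates: $I_\alpha$ is bounded $L^{p}(v^{p})\to L^{s}(v^{s})$ with $v^{s}\in\mathcal{A}_{1}$-type classes. Alternatively, one can note that $q$ may be enlarged, since an $L^\infty$-type atom estimate suffices via Remark~\ref{infinite atom}. The resulting scale is $|B|^{\alpha q_0/n}\|\chi_B\|_X^{-q_0}w_{A_i^{-1}}(B)$. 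For part b) the local estimate is direct: H\"older with exponents $q/p_0$ and $(q/p_0)'$, $L^{q}$-boundedness of $T_{0,m}$ (Theorem 6 of Ro-Ur), and $w_{A_i^{-1}}\in RH_{(q/p_0)'}$. Here $RH$ is invariant under $\mathcal{O}(n)$ and balls, and doubling then gives $\|\chi_B\|_X^{-p_0}w_{A_i^{-1}}(B)$.

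\emph{Far part.} For $x\notin E$ we use the vanishing moments. We Taylor-expand the kernel $y\mapsto\prod_j|x-A_jy|^{-\alpha_j}$ around $x_0$ to order $d$. For $y\in B$ the remainder is bounded by
\[
r^{d+1}\sum_i |x-A_ix_0|^{\alpha-n-d-1},
\]
because each factor satisfies $|x-A_jy|\approx|x-A_jx_0|\ge \min_i|x-A_ix_0|$. For $\alpha=0$ this is exactly where the invertibility of $A_i-A_j$ is needed, since the worst factor controls the product. This gives
\[
|T_{\alpha,m}a(x)|\lesssim \|a\|_1 r^{d+1}\sum_i|A_i^{-1}x-x_0|^{\alpha-n-d-1}\lesssim \|\chi_B\|_X^{-1}\sum_i \frac{r^{n+d+1}}{|A_i^{-1}x-x_0|^{n+d+1-\alpha}}.
\]
Raise this to the power $q_0$ (resp.\ $p_0$), substitute $x=A_iz$, and decompose into dyadic annuli $2^{k+1}B\setminus 2^kB$. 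Using the $\mathcal{A}_1$ condition of $w_{A_i^{-1}}$ in the form $w(2^kB)\le C2^{kn}w(B)$ from Remark~\ref{Dn}, we get
\[
\sum_k 2^{-k(n+d+1-\alpha)q_0}2^{kn}\,r^{\alpha q_0}w_{A_i^{-1}}(B).
\]
This series converges because $d\ge d_X\ge\lfloor n(1/p_0-1)\rfloor$ implies $(n+d+1-\alpha)q_0>n$. Summing the local and far parts gives both a) and b).
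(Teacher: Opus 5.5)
There is a genuine gap in the local part of \textbf{a)}; the rest of your argument is sound. Your far-region argument works: the Taylor expansion of $y\mapsto\prod_j|x-A_jy|^{-\alpha_j}$ about $x_0$, the bound $\prod_j|x-A_jy|^{-\alpha_j-|\beta_j|}\le\min_i|x-A_iy|^{\alpha-n-d-1}$, the dyadic annuli and the doubling of $w_{A_i^{-1}}\in\mathcal{A}_1$. So does the local part of \textbf{b)}, once you note $\|a\|_{L^q}\le|B|^{1/q-p_0/q}\|a\|_{L^{q/p_0}}\le|B|^{1/q}\|\chi_B\|_X^{-1}$ before using the $L^q$-boundedness of $T_{0,m}$ and $w\in RH_{(q/p_0)'}$ on the ball $A_i(2B)$.

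The gap in \textbf{a)} comes from a sign error. The hypothesis $q>np_0/\alpha$ means $q/p_0>n/\alpha$, so $p_0/q-\alpha/n<0$, not $>0$. Hence $I_\alpha:L^{q/p_0}\to L^{\tilde q}$ is not available, since you are outside the Hardy--Littlewood--Sobolev range. Your H\"older step would also need a reverse H\"older condition that \textbf{a)} does not assume. None of the fallbacks you list closes this. In particular, Remark~\ref{infinite atom} runs the wrong way: an $(X,\infty,d)$-atom is an $(X,q,d)$-atom, not conversely, so you cannot replace $a$ by an $L^\infty$-atom.

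The missing idea is that $q/p_0>n/\alpha$ is exactly the condition $(n-\alpha)(q/p_0)'<n$. That is, $|z-\cdot|^{\alpha-n}\in L^{(q/p_0)'}$ on balls of radius $r$, uniformly in $z$. This function is radially decreasing about $z$, so its integral over $B$ is at most its integral over $B(z,r)$. H\"older then gives, for every $z\in\mathbb{R}^n$,
\[
I_\alpha(|a|)(z)\le\|a\|_{L^{q/p_0}}\Big(\int_{B(z,r)}|z-y|^{(\alpha-n)(q/p_0)'}\,dy\Big)^{1/(q/p_0)'}\lesssim|B|^{p_0/q}\|\chi_B\|_X^{-1}\,r^{\alpha-np_0/q}\approx|B|^{\frac{\alpha}{n}}\|\chi_B\|_X^{-1}.
\]
By the pointwise domination $|T_{\alpha,m}a|\lesssim\sum_jI_\alpha(|a|)(A_j^{-1}\cdot)$, this yields $\|T_{\alpha,m}a\|_{L^\infty}\lesssim|B|^{\alpha/n}\|\chi_B\|_X^{-1}$. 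Therefore
\[
\int_E|T_{\alpha,m}a|^{q_0}w\lesssim|B|^{\frac{\alpha}{n}q_0}\|\chi_B\|_X^{-q_0}\sum_{i=1}^m w_{A_i^{-1}}(2B)\lesssim|B|^{\frac{\alpha}{n}q_0}\|\chi_B\|_X^{-q_0}\sum_{i=1}^m w_{A_i^{-1}}(B).
\]
This uses only the doubling of $w_{A_i^{-1}}\in\mathcal{A}_1$, which is why \textbf{a)} needs no $RH$ hypothesis.

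Two minor corrections. First, the invertibility of $A_i-A_j$ plays no role in the far-region kernel estimate, since the $\min$ bound holds for any orthogonal $A_j$. It enters only through the $L^q$-boundedness of $T_{0,m}$ in the local part of \textbf{b)}. Second, the convergence condition $d\ge\lfloor n(1/p_0-1)\rfloor$ does not follow from $d\ge d_X$ alone. It follows from choosing $\theta\le p_0$ via Remark~\ref{Jensen}, a convention the paper also uses implicitly.
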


\begin{proof}
The proof is analogous to that of \cite[Lemma 3.2]{Rocha2018}, but now considering the $X$-atoms in Definition \ref{atom}.
\end{proof}

We end this section with the following proposition, which is crucial to get our main result presented in the next section.

\begin{proposition} \label{w ineq T}
Let $0 \leq \alpha < n$, $0 < p_0 < \frac{n}{n+ \alpha}$ and let $X$ be a $\mathcal{O}(n)$-invariant ball quasi-Banach function space 
such that $X^{1/p_0}$ is a ball Banach function space. For the operator $T_{\alpha, m}$ defined by (\ref{T}) with 
$m \in \mathbb{N} \cap \left(1 - \frac{\alpha}{n}, +\infty \right)$, we have that: 
\\
\textbf{a)} If $0 < \alpha < n$, $\frac{1}{q_0} := \frac{1}{p_0} - \frac{\alpha}{n}$ and $w \in \mathcal{A}_1 \cap ((X^{1/p_0})')^{p_0/q_0}$, then
\[
\| T_{\alpha, m} f \|_{L^{q_0}(w)} \leq C \sum_{i=1}^{m} \| f \|_{H^{p_0}\left( [w_{A_i^{-1}}]^{\frac{p_{0}}{q_{0}}} \right)},
\]
for all $f \in H^{X, p_0, q/p_0, d}_{fin}(w)$, where $q$ is sufficiently large.
\\
\textbf{b)} If $\alpha=0$ and $w \in \mathcal{A}_1 \cap RH_{(q/p_0)'} \cap (X^{1/p_0})'$, then
\[
\| T_{0, m} f \|_{L^{p_0}(w)} \leq C \sum_{i=1}^{m} \| f \|_{H^{p_0}\left( [w_{A_i^{-1}}] \right)},
\]
for all $f \in H^{X, p_0, q/p_0, d}_{fin}(w)$, where $q$ is sufficiently large.
\end{proposition}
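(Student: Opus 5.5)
Fix $f \in H^{X, p_0, q/p_0, d}_{fin}(w)$ and write it as a finite combination $f = \sum_{j=1}^{N} \lambda_j a_j$ of $(X, q/p_0, d)$-atoms supported on balls $B_j$. The atomic decomposition will be chosen so that the quasi-norm $\| f \|_{H^{X,p_0,q/p_0,d}_{fin}(\cdot)}$, taken with respect to the relevant weight, is almost attained. Since $q_0 \geq p_0$ and $p_0 \le 1$, we have the pointwise bound $|T_{\alpha,m} f| \le \sum_j \lambda_j |T_{\alpha,m} a_j|$. In case b) this gives
\[
\| T_{0,m} f\|_{L^{p_0}(w)}^{p_0} \le \sum_j \lambda_j^{p_0} \int |T_{0,m}a_j|^{p_0} w .
\]
In case a) we have $q_0 > 1$ possibly, so we instead use the $\min(1,q_0)$-triangle inequality. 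Alternatively, we first pass to the power $p_0/q_0\le 1$ on the weight side, as explained below.

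\textbf{Case b).} By Lemma \ref{T atom estim} b), each atom contributes at most $C\lambda_j^{p_0}\|\chi_{B_j}\|_X^{-p_0}\sum_i w_{A_i^{-1}}(B_j)$. Summing over $j$ gives
\[
\| T_{0,m} f\|_{L^{p_0}(w)}^{p_0} \lesssim \sum_{i=1}^m \Big\| \sum_j \Big(\frac{\lambda_j}{\|\chi_{B_j}\|_X}\Big)^{p_0}\chi_{B_j}\Big\|_{L^1(w_{A_i^{-1}})} .
\]
Now use $\mathcal{O}(n)$-invariance. By Remark \ref{O de n} (and its analogue for $RH_{s}$, which is immediate since rotations map balls to balls isometrically), each $w_{A_i^{-1}}$ lies in $\mathcal{A}_1\cap RH_{(q/p_0)'}$. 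By Remarks \ref{On 1} and \ref{On 2}, $(X^{1/p_0})'$ is $\mathcal{O}(n)$-invariant, so $w_{A_i^{-1}} \in (X^{1/p_0})'$ as well. Also, since every $(X,q/p_0,d)$-atom is one for each such weight, the sets $H^{X,p_0,q/p_0,d}_{fin}(w_{A_i^{-1}})$ all coincide with $H^{X,q/p_0,d}_{fin}$. The key point is to take, for each $i$ separately, a near-optimal decomposition of $f$ for the weight $w_{A_i^{-1}}$. Since the left side does not depend on the decomposition, we can bound it by the minimum over $i$ of these decompositions, or more simply by their sum. Then Lemma \ref{HX finito 2} applied with weight $w_{A_i^{-1}}$ gives $\|f\|_{H^{X,p_0,q/p_0,d}_{fin}(w_{A_i^{-1}})}\approx \|f\|_{H^{p_0}(w_{A_i^{-1}})}$, which yields b).

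\textbf{Case a).} Set $v=w^{p_0/q_0}$. Since $w\in\mathcal{A}_1$ and $p_0/q_0<1$, Remark \ref{wr A1} gives $v\in\mathcal{A}_1$, and the hypothesis says $v\in (X^{1/p_0})'$. Lemma \ref{A1 y RHs} gives $v \in RH_{s}$ for some $s>1$, so for $q$ large, $(q/p_0)'\le s$ and Remark \ref{RHs RHt} gives $v\in RH_{(q/p_0)'}$; the same holds for each $v_{A_i^{-1}}=(w_{A_i^{-1}})^{p_0/q_0}$. We now relate the atom estimate to $v$. Lemma \ref{T atom estim} a), with $q>np_0/\alpha$, gives
\[
\|T_{\alpha,m} a_j\|_{L^{q_0}(w)} \lesssim |B_j|^{\alpha/n}\|\chi_{B_j}\|_X^{-1}\Big(\sum_i w_{A_i^{-1}}(B_j)\Big)^{1/q_0}.
\]
The $\mathcal{A}_1$ property and Hölder give $|B|^{\alpha/n} w(B)^{1/q_0}\lesssim (w^{p_0/q_0}(B))^{1/p_0}$. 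Indeed, $w(B)/|B|\lesssim \operatorname{ess\,inf}_B w$, so $w^{p_0/q_0}(B)\ge |B|\,(\operatorname{ess\,inf}_B w)^{p_0/q_0}\gtrsim |B|^{1-p_0/q_0}w(B)^{p_0/q_0}$, and $1/p_0-1/q_0=\alpha/n$. Hence
\[
\|T_{\alpha,m}a_j\|_{L^{q_0}(w)}^{p_0}\lesssim \|\chi_{B_j}\|_X^{-p_0}\sum_i v_{A_i^{-1}}(B_j).
\]

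\textbf{Main obstacle.} The remaining difficulty is summing over $j$, since $q_0$ may exceed $1$ and $L^{q_0}(w)$ is then not $p_0$-normed. I would first try the embedding $\ell^{p_0}\subset\ell^{q_0}$ together with the quasi-triangle inequality in $L^{q_0}(w)$ raised to the power $\min(1,q_0)\ge p_0$. That is, use $\|\sum_j g_j\|_{L^{q_0}(w)}^{p_0}\le\sum_j\|g_j\|^{p_0}$, valid when $q_0\le1$. When $q_0>1$, I would use Minkowski plus the fact that $p_0<1$ makes $\ell^{p_0}$ sums dominate $\ell^1$ sums. This reduces the estimate to the same expression as in case b), with $v$ in place of $w$. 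Conclude exactly as in case b), using Lemma \ref{HX finito 2} with the weights $v_{A_i^{-1}}=[w_{A_i^{-1}}]^{p_0/q_0}$.
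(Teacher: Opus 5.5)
Your part b) is the paper's argument: Lemma \ref{T atom estim} b), summation over the atoms, and Lemma \ref{HX finito 2} for the rotated weights $w_{A_i^{-1}}$.

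\textbf{Part a): a different, more elementary route.} The paper writes $\sum_j(\lambda_j|B_j|^{\alpha/n}\|\chi_{B_j}\|_X^{-1})^{q_0}w_{A_i^{-1}}(B_j)$ as an integral and applies $\ell^{p_0}\hookrightarrow\ell^{q_0}$ pointwise inside it. It is then left with an $L^{2q_0/p_0}(w_{A_i^{-1}})$-norm. It brings this down to $L^{1}([w_{A_i^{-1}}]^{p_0/q_0})$ using the domination $|B_j|^{\alpha/n}\chi_{B_j}\le(M_{\alpha p_0/2}\chi_{B_j})^{2/p_0}$ and the weighted vector-valued inequality for $M_{\alpha p_0/2}$ from \cite[Lemma 2.13]{Rocha2018}, which needs $[w_{A_i^{-1}}]^{p_0/(2q_0)}\in\mathcal{A}_{2,2q_0/p_0}$.

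You instead absorb $|B_j|^{\alpha/n}$ ball by ball with $|B|^{\alpha/n}w(B)^{1/q_0}\lesssim (w^{p_0/q_0}(B))^{1/p_0}$. This is a correct consequence of $w\in\mathcal{A}_1$, uniformly over the rotations, since $w_{A_i^{-1}}$ has the same $\mathcal{A}_1$ constant as $w$. You then use $p_0$-subadditivity on the numbers $\lambda_j\|T_{\alpha,m}a_j\|_{L^{q_0}(w)}$, i.e.\ $\ell^{p_0}\hookrightarrow\ell^{q_0}$ for scalars rather than pointwise. This reaches the same quantity $\sum_i\|\sum_j(\lambda_j/\|\chi_{B_j}\|_X)^{p_0}\chi_{B_j}\|_{L^1(v_{A_i^{-1}})}$ as the paper, with no maximal-function machinery. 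Since $\mathcal{A}_1$ is assumed anyway, nothing is lost.

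\textbf{Two side remarks.} First, your ``main obstacle'' does not occur. The hypothesis $p_0<\frac{n}{n+\alpha}$ gives $\frac{1}{q_0}=\frac{1}{p_0}-\frac{\alpha}{n}>1$, so $q_0<1$ always, as the paper notes. Second, checking $v_{A_i^{-1}}\in RH_{(q/p_0)'}$ through Lemma \ref{A1 y RHs} is unnecessary, and it makes $q$ depend on $[w]_{\mathcal{A}_1}$. You only use $\|f\|_{H^{X,p_0,q/p_0,d}_{fin}(v)}\lesssim\|f\|_{H^{p_0}(v)}$, and that half of Lemma \ref{HX finito 2} is proved using only $v\in\mathcal{A}_1\cap(X^{1/p_0})'$. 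That is all the paper checks in a), and it is what lets Theorem \ref{main thm} a) assume just $q>\max\{1,p_0n/\alpha\}$.

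\textbf{The step that needs repair: passing to the infimum when $m\ge2$.} For a fixed decomposition $D$ you get $\|T_{\alpha,m}f\|^{p_0}_{L^{q_0}(w)}\lesssim\sum_{i=1}^mF_i(D)$ (with $q_0=p_0$ in b)), and the same $D$ appears in every term. A decomposition $D_i$ that is near-optimal for the $i$-th weight gives no control of the cross terms $F_{i'}(D_i)$, $i'\ne i$. Neither the minimum nor the sum over $i$ of these bounds yields $\sum_i\inf_DF_i(D)$. The paper's ``take the infimum'' passes over the same point.

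The fix is to observe that $\sum_iF_i(D)=\|\sum_j(\lambda_j/\|\chi_{B_j}\|_X)^{p_0}\chi_{B_j}\|_{L^1(W)}$, with $W:=\sum_iw_{A_i^{-1}}$ in b) and $W:=\sum_iv_{A_i^{-1}}$ in a). This $W$ has the properties needed for Lemma \ref{HX finito 2}, all with constants controlled by those of $w$:
\begin{itemize}
\item it is in $\mathcal{A}_1$, by sublinearity of $M$;
\item it is in $(X^{1/p_0})'$, as a finite sum in a normed space;
\item in b), it is in $RH_{(q/p_0)'}$, by Minkowski's inequality.
\end{itemize}
Take the infimum over $D$ of this single functional and apply Lemma \ref{HX finito 2} once, with weight $W$. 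This gives $\|T_{\alpha,m}f\|^{p_0}_{L^{q_0}(w)}\lesssim\|f\|^{p_0}_{H^{p_0}(W)}=\sum_i\|f\|^{p_0}_{H^{p_0}(w_{A_i^{-1}})}$ (resp.\ with $v_{A_i^{-1}}=[w_{A_i^{-1}}]^{p_0/q_0}$). Finally, $(\sum_ia_i^{p_0})^{1/p_0}\le m^{1/p_0-1}\sum_ia_i$ gives the stated form.
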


\begin{proof}
Let $0 < p_0 < \frac{n}{n+ \alpha}$ and $\frac{1}{q_0} := \frac{1}{p_0} - \frac{\alpha}{n}$, where $0 \leq \alpha < n$. Given 
$f \in H^{X, p_0, q/p_0, d}_{fin}(w)$ we have that $f = \sum_{j=1}^{k} \lambda_j a_j$, where $a_j$ is an $(X, q/p_0, d)$-atom supported 
on a ball $B_j$. Being $0 < q_0 < 1$, to apply Lemma \ref{T atom estim}, according to the case $0 < \alpha < n$ or $\alpha = 0$, we obtain
\[
\| T_{\alpha, m} f \|_{L^{q_0}(w)}^{q_0} \leq \sum_{j=1}^{k} \lambda_{j}^{q_0} \int_{\mathbb{R}^{n}} |T_{\alpha, m}a_j(x)|^{q_0} w(x) dx
\]
\[
\leq C \sum_{i=1}^{m} \sum_{j=1}^{k} \left(\frac{\lambda_{j}  |B_j|^{\frac{\alpha}{n}}}{\| \chi_{B_j} \|_{X}}\right)^{q_0} 
w_{A_{i}^{-1}}(B_j)
\]
\[
= C \sum_{i=1}^{m} \int_{\mathbb{R}^{n}} \left\{ \sum_{j=1}^{k} \left(\frac{\lambda_{j}  |B_j|^{\frac{\alpha}{n}} \chi_{B_j}(x)}{\| \chi_{B_j} \|_{X}}\right)^{q_0} \right\} w_{A_{i}^{-1}}(x) dx
\]
the embedding $l^{p_0} \hookrightarrow l^{q_0}$ gives
\begin{equation}
\leq C \sum_{i=1}^{m} \int_{\mathbb{R}^{n}} \left\{\sum_{j=1}^{k} \left(\frac{\lambda_{j}  |B_j|^{\frac{\alpha}{n}} \chi_{B_j}(x)}{\| \chi_{B_j} \|_{X}}\right)^{p_0} \right\}^{\frac{q_0}{p_0}} w_{A_{i}^{-1}}(x) dx \label{desig}.
\end{equation}
Now, it is clear that if $\alpha=0$, then the proposition follows from Lemma \ref{HX finito 2}, since 
$w_{A^{-1}_i} \in \mathcal{A}_1 \cap RH_{(q/p_0)'} \cap (X^{1/p_0})'$ (see Remarks \ref{On 1}, \ref{On 2} and \ref{O de n}) and 
$H^{X, p_0, q/p_0, d}_{fin}(w) = H^{X, p_0, q/p_0, d}_{fin}(w_{A_i^{-1}})$ as sets. 

For the case $0 < \alpha < n$, a computation gives 
$|B_j|^{\frac{\alpha}{n}} \chi_{B_j} \leq \left(M_{\frac{\alpha p_0}{2}} (\chi_{B_j})\right)^{\frac{2}{p_0}}$, so (\ref{desig})
\[
\leq C \sum_{i=1}^{m} \int_{\mathbb{R}^{n}} \left\{\sum_{j=1}^{k} \left( \frac{\lambda_{j} 
\left(M_{\frac{\alpha p_0}{2}} (\chi_{B_j})\right)^{\frac{2}{p_0}}}{\| \chi_{B_j} \|_{X}}\right)^{p_0} \right\}^{\frac{q_0}{p_0}} 
w_{A_{i}^{-1}}(x) dx
\]
\[
= C \sum_{i=1}^{m} \left\| \left\{\sum_{j=1}^{k} \frac{\lambda_{j}^{p_0}  \left(M_{\frac{\alpha p_0}{2}} (\chi_{B_j})(\cdot)\right)^{2}}{\| \chi_{B_j} \|_{X}^{p_0}} \right\}^{\frac{1}{2}} \right\|^{\frac{2q_0}{p_0}}_{L^{\frac{2q_0}{p_0}}(w_{A_{i}^{-1}})}
\]
because $\frac{p_0}{2q_0} = \frac{1}{2} - \frac{\alpha p_0}{2n}$ 
and $[w_{A_i^{-1}}]^{\frac{p_0}{2q_0}} \in \mathcal{A}_{2, \frac{2q_0}{p_0}}$, by \cite[Lemma 2.13]{Rocha2018} we have
\[
\leq C \sum_{i=1}^{m} \left\| \left\{\sum_{j=1}^{k} \frac{\lambda_{j}^{p_0}  \chi_{B_j}}{\| \chi_{B_j} \|_{X}^{p_0}} \right\}^{\frac{1}{2}} \right\|^{\frac{2q_0}{p_0}}_{L^{2}\left([w_{A_{i}^{-1}}]^{\frac{p_0}{q_0}} \right)}
\]
\[
=C \sum_{i=1}^{m} \left\| \sum_{j=1}^{k} \frac{\lambda_{j}^{p_0}  \chi_{B_j}}{\| \chi_{B_j} \|_{X}^{p_0}}  \right\|^{\frac{q_0}{p_0}}_{L^{1}\left([w_{A_{i}^{-1}}]^{\frac{p_0}{q_0}} \right)}.
\]
Since, for each $i=1, ..., m$, $[w_{A_i^{-1}}]^{\frac{p_0}{q_0}} \in \mathcal{A}_1 \cap (X^{1/p_0})'$ (see Remarks \ref{On 1}, \ref{On 2}, 
\ref{wr A1} and \ref{O de n}), and $H^{X, p_0, q/p_0, d}_{fin}(w) = H^{X, p_0, q/p_0, d}_{fin}\left([w_{A_i^{-1}}]^{\frac{p_0}{q_0}} \right)$ as sets, once again by Lemma \ref{HX finito 2}, we can take the infimum over all such decompositions to get
\[
\| T_{\alpha, m} f \|_{L^{q_0}(w)} \leq C \sum_{i=1}^{m} \| f \|_{H^{p_0}\left( [w_{A_i^{-1}}]^{\frac{p_{0}}{q_{0}}} \right)},
\]
for all $f \in H^{X, p_0, q/p_0, d}_{fin}(w)$.
\end{proof}

\section{Main result} \label{resultados principales}

In this section, we prove the main result of this work. Our two main techniques are the finite atomic decomposition and weighted norm inequalities of the operator $T_{\alpha, m}$ established in Section \ref{weighted estimates}.

\begin{theorem} \label{main thm}
Given $0 \leq \alpha < n$ and $m \in \mathbb{N} \cap \left(1 - \frac{\alpha}{n}, +\infty \right)$, let $T_{\alpha, m}$ be the fractional operator defined by (\ref{T}) and let $X$ and $Y$ be ball quasi-Banach function spaces such that the quasi-norm of $X$ is $\mathcal{O}(n)$-invariant and absolutely continuous satisfying (\ref{A1}) and (\ref{A2}), and $X$ is strictly $s$-convex, where $s \in (0,1]$ is as in 
(\ref{A1}). 
\\
\textbf{a)} If for some $0 < p_0 < q_0 < 1$ such that $\frac{1}{p_0} - \frac{1}{q_0} = \frac{\alpha}{n}$ with $0 < \alpha < n$, $X$ 
satisfy (\ref{A2}) with $q > \max \left\{1, \frac{p_0 n}{\alpha} \right\}$, $X^{1/p_0}$ and $Y^{1/q_0}$ are ball Banach function spaces such that $(Y^{1/q_0})' = ((X^{1/p_0})')^{p_0/q_0}$ and the Hardy-Littlewood operator $M$ is bounded on $(Y^{1/q_0})'$,
then $T_{\alpha, m}$ can be extended to a bounded operator $H_X(\mathbb{R}^n) \to Y$.
\\
\textbf{b)} If for some $0 < p_0 \leq 1$, $X^{1/p_0}$ is a ball Banach function space, the Hardy-Littlewood operator $M$ is bounded on 
$(X^{1/p_0})'$ and $X$ satisfy (\ref{A2}) with 
\begin{equation} \label{qb}
q > \max \left\{1, p_0 \left(1+2^{n+3} \| M \|_{(X^{1/p_0})'} \right) \right\},
\end{equation}
then $T_{0, m}$ can be extended to a bounded operator $H_X(\mathbb{R}^n) \to X$.
\end{theorem}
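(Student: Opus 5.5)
The argument combines Proposition \ref{w ineq T} with a Rubio de Francia extrapolation argument, applied to the dense class $H^{X,q/p_0,d}_{fin}(\mathbb{R}^n)$ (Corollary \ref{dense set}), and then extends by density. I treat part \textbf{a)}; part \textbf{b)} is the same with $q_0=p_0$ and $Y=X$.

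\textbf{Step 1: duality for one atomic sum.} Fix $f\in H^{X,q/p_0,d}_{fin}(\mathbb{R}^n)$. Since $Y^{1/q_0}$ is a ball Banach function space, Lemma \ref{doble dual} gives
\[
\|T_{\alpha,m}f\|_Y^{q_0}=\big\||T_{\alpha,m}f|^{q_0}\big\|_{Y^{1/q_0}}=\sup\Big\{\int |T_{\alpha,m}f|^{q_0}g : g\ge0,\ \|g\|_{(Y^{1/q_0})'}\le1\Big\}.
\]

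\textbf{Step 2: Rubio de Francia weight.} For such a $g$, set
\[
Rg=\sum_{k\ge0}\frac{M^kg}{(2\|M\|_{(Y^{1/q_0})'})^k}.
\]
Then $g\le Rg$, $\|Rg\|_{(Y^{1/q_0})'}\le2$, and $Rg\in\mathcal A_1$ with $[Rg]_{\mathcal A_1}\le 2\|M\|$. Write $w=Rg$. Because $(Y^{1/q_0})'=((X^{1/p_0})')^{p_0/q_0}$, we have $w\in \mathcal A_1\cap((X^{1/p_0})')^{p_0/q_0}$. Hence Proposition \ref{w ineq T} a) applies and gives
\[
\int|T_{\alpha,m}f|^{q_0}g\le\|T_{\alpha,m}f\|_{L^{q_0}(w)}^{q_0}\lesssim \sum_i\|f\|^{q_0}_{H^{p_0}(v_i)},\qquad v_i=[w_{A_i^{-1}}]^{p_0/q_0}.
\]

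\textbf{Step 3: back to $X$.} By Remarks \ref{On 1}, \ref{On 2} and \ref{O de n}, each $v_i$ lies in $(X^{1/p_0})'$ with norm $\lesssim1$, uniformly in $g$. Write
\[
\|f\|_{H^{p_0}(v_i)}^{p_0}=\int(\mathcal M^0_Nf)^{p_0}v_i\le\|(\mathcal M^0_Nf)^{p_0}\|_{X^{1/p_0}}\|v_i\|_{(X^{1/p_0})'}\lesssim\|f\|_{H_X}^{p_0}.
\]
Taking the supremum over $g$ yields $\|T_{\alpha,m}f\|_Y\lesssim\|f\|_{H_X}$ on the finite atomic class.

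\textbf{Step 4: extension.} Corollary \ref{dense set} shows this class is dense in $H_X$. Using Theorem \ref{equiv quasi-norm} and the completeness and Fatou property of $Y$, property (P5), the operator extends to a bounded operator $H_X\to Y$.

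\textbf{Part b).} For $\alpha=0$, Proposition \ref{w ineq T} b) additionally requires $w\in RH_{(q/p_0)'}$. Lemma \ref{A1 y RHs} gives $w\in RH_s$ with $s=1+(2^{n+1}[w]_{\mathcal A_1})^{-1}\ge 1+(2^{n+2}\|M\|)^{-1}$. Since $(q/p_0)'\le s$ exactly when $q/p_0\ge 1+2^{n+2}\|M\|$, condition (\ref{qb}) suffices, and Remark \ref{RHs RHt} finishes.

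\textbf{Main obstacle.} I expect the delicate point to be the uniformity of constants. The constants in Proposition \ref{w ineq T} depend on the $\mathcal A_1$, $RH$ and $(X^{1/p_0})'$ characteristics of $w$, and those of its rotations and powers, through Lemma \ref{HX finito 2} and the vector-valued fractional inequality. These must be checked to be bounded independently of $g$, which the controlled constant of $Rg$ should provide. A second point is that the equality $H^{X,p_0,q,d}_{fin}(w)=H^{X,q,d}_{fin}$ holds only as sets. Therefore the weighted bound must go through $\|f\|_{H^{p_0}(v_i)}$ and the grand maximal function, as done in Step 3, rather than through atomic quasi-norms.
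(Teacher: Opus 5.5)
Your proposal is correct and follows essentially the same route as the paper's proof: duality in $Y^{1/q_0}$ via Lemma~\ref{doble dual}, the Rubio de Francia weight $\mathcal{R}g$, Proposition~\ref{w ineq T}, H\"older's inequality in $X^{1/p_0}$ combined with $\mathcal{O}(n)$-invariance, and density via Corollary~\ref{dense set}. Part b) is handled the same way, by checking $\mathcal{R}g \in RH_{(q/p_0)'}$ through Lemma~\ref{A1 y RHs} and Remark~\ref{RHs RHt}; your computation also shows that $2^{n+2}$ would suffice in place of $2^{n+3}$ in (\ref{qb}), and your remark on the uniformity in $g$ of the constants makes explicit a point the paper leaves implicit.
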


\begin{proof} The operator $T_{\alpha, m}$ is well defined on the elements of $H^{X, q/p_0, d}_{fin}(\mathbb{R}^{n})$. So given 
$f \in H^{X, q/p_0, d}_{fin}(\mathbb{R}^{n})$, from Lemma \ref{doble dual}, we have that
\[
\| T_{\alpha, m} f\|_{Y}^{q_0} = \| |T_{\alpha, m} f|^{q_0} \|_{Y^{1/q_0}} = \sup \int |T_{\alpha, m} f(x)|^{q_0} |g (x)| dx,
\]
where the supremum is taken over all $g \in (Y^{1/q_0})'$ such that $\| g \|_{(Y^{1/q_0})'} \leq 1$.
Now we utilize the Rubio de Francia iteration algorithm with respect to $(Y^{1/q_0})'$. Given a function $g$, define
\[
\mathcal{R}g (x) = \sum_{i=0}^{\infty} \frac{M^{i}g(x)}{2^{i} \|M\|_{(Y^{1/q_0})'}^{i}},
\]
where $M^{0}g = g$ and, for $i \geq 1$, $M^{i}g = M \circ \cdot \cdot \cdot \circ M g$ denotes $i$ iterates of the Hardy-Littlewood maximal operator. The function $\mathcal{R}g$ satisfies:

$(1) \, |g(x)| \leq \mathcal{R}g(x)$ for all $x \in \mathbb{R}^{n}$;

$(2) \, \| \mathcal{R}g \|_{(Y^{1/q_0})'} \leq C \|g \|_{(Y^{1/q_0})'}$;

$(3) \, \mathcal{R}g \in \mathcal{A}_1$ and $[\mathcal{R}g]_{\mathcal{A}_1} \leq 2 \| M\|_{(Y^{1/q_0})'}$.\\
by these properties and since $(Y^{1/q_0})' = ((X^{1/p_0})')^{p_0/q_0}$ and 
$H^{X, q/p_0, d}_{fin}(\mathbb{R}^{n}) = H^{p_0, q/p_0, d}_{fin}(\mathcal{R}g)$ as sets, Proposition \ref{w ineq T} - \textbf{a)} gives
\[
\int |T_{\alpha, m} f(x)|^{q_0} |g (x)| dx \leq \int |T_{\alpha, m} f(x)|^{q_0} \mathcal{R}g(x) dx
\]
\[
= \| T_{\alpha, m}f \|_{L^{q_0}(\mathcal{R}g)}^{q_0}
\leq C \sum_{i=1}^{m} \|f \|_{H^{p_0}\left([(\mathcal{R}g)_{A_{i}^{-1}}]^{\frac{p_0}{q_0}} \right)}^{q_0}
\]
\[
=C  \sum_{i=1}^{m} \left( \int [\mathcal{M}_{N}f(x)]^{p_0} [(\mathcal{R}g)_{A_{i}^{-1}}(x)]^{\frac{p_0}{q_0}} dx \right)^{\frac{q_0}{p_0}}
\]
by H\"older's inequality (see \cite[Lemma 2.5]{Zhang}), we have
\[
\leq C \| [\mathcal{M}_{N}f]^{p_0} \|_{X^{1/p_0}}^{\frac{q_0}{p_0}} \sum_{i=1}^{m}\left\|[(\mathcal{R}g)_{A_{i}^{-1}}]^{\frac{p_0}{q_0}} \right\|_{(X^{1/p_0})'}^{\frac{q_0}{p_0}}
\]
\[
= C \| \mathcal{M}_{N}f \|_{X}^{q_0} \sum_{i=1}^{m}\left\|[(\mathcal{R}g)_{A_{i}^{-1}}] 
\right\|_{((X^{1/p_0})')^{p_0/q_0}}
\]
since $X$ is $\mathcal{O}(n)$-invariant (see Remarks \ref{On 1} and \ref{On 2}) and $(Y^{1/q_0})' = ((X^{1/p_0})')^{p_0/q_0}$, we have
\[
= C \| f \|_{H_X(\mathbb{R}^n)}^{q_0} \sum_{i=1}^{m} \|\mathcal{R}g\|_{(Y^{1/q_0})'}
\]
\[
\leq  C \|f \|_{H_X(\mathbb{R}^n)}^{q_0} \left\|g \right\|_{(Y^{1/q_0})'}.
\]
Thus
\[
\| T_{\alpha, m} f\|_{Y} \leq C \|f \|_{H_X(\mathbb{R}^n)},
\]
for all $f \in H^{X, q/p_0, d}_{fin}(\mathbb{R}^{n})$. Then, the part \textbf{a)} of the theorem follows from Corollary \ref{dense set}. 

Finally, to prove the part \textbf{b)} of the theorem (i.e.: the case $\alpha = 0$), we proceed as in the proof of \textbf{a)}, by considering $p_0=q_0$, $X = Y$, Proposition \ref{w ineq T} - \textbf{b)} and verifying that $Rg \in \mathcal RH_{(q/p_0)'}$. Indeed, (\ref{qb}), Remark \ref{RHs RHt} and Lemma \ref{A1 y RHs} give $\mathcal{R}g \in RH_{(q/p_0)'}$. Thus \textbf{b)} follows and with them, the theorem.
\end{proof}

\begin{corollary} \label{estim riesz}
Let $0 < \alpha < n$ and $0 < p_0 < q_0 \leq 1$ and let $X$ and $Y$ be ball quasi-Banach function spaces as in Theorem \ref{main thm}. Then the Riesz potential $I_{\alpha}$ given by (\ref{Riesz pot}) can be extended to a bounded operator $H_X(\mathbb{R}^n) \to Y$.
\end{corollary}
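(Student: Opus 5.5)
The plan is to realize $I_\alpha$ as a member of the family (\ref{T}) and then invoke Theorem \ref{main thm}\,\textbf{a)}. Take $m=1$, $A_1=I$ and $\alpha_1=n-\alpha$ in (\ref{T}). Then $T_{\alpha,1}=I_\alpha$, as already observed in the Introduction.

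For this to be admissible we need $m=1\in\mathbb{N}\cap(1-\frac{\alpha}{n},+\infty)$. This holds because $0<\alpha<n$ gives $1-\frac{\alpha}{n}<1$. The identity matrix is orthogonal. The extra invertibility requirement on $A_i-A_j$ is imposed only for $\alpha=0$, so it does not arise here.

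Next we check the hypotheses on $X$ and $Y$. The corollary assumes that $X$ and $Y$ are as in Theorem \ref{main thm}, so we use exactly the hypotheses of part \textbf{a)}:
\begin{itemize}
\item $X$ is $\mathcal{O}(n)$-invariant, has absolutely continuous quasi-norm, and is strictly $s$-convex;
\item $X$ satisfies (\ref{A1}) and (\ref{A2}) with $q>\max\{1,p_0n/\alpha\}$;
\item $X^{1/p_0}$ and $Y^{1/q_0}$ are ball Banach function spaces with $(Y^{1/q_0})'=((X^{1/p_0})')^{p_0/q_0}$;
\item $M$ is bounded on $(Y^{1/q_0})'$.
\end{itemize}
With these, Theorem \ref{main thm}\,\textbf{a)} applied to $T_{\alpha,1}$ gives the bounded extension $H_X(\mathbb{R}^n)\to Y$.

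One point needs care. The corollary allows $q_0\le 1$, whereas the theorem was stated for $q_0<1$. Since $\frac{1}{p_0}-\frac{1}{q_0}=\frac{\alpha}{n}$ with $p_0<\frac{n}{n+\alpha}$ (as Proposition \ref{w ineq T} requires), we get $\frac{1}{q_0}>1$, so $q_0<1$ automatically. I would check that the hypotheses inherited from Theorem \ref{main thm} force this. If the endpoint $q_0=1$ must be covered, I would rerun the proof of Theorem \ref{main thm}. The only place $q_0<1$ is used is the $q_0$-triangle inequality in Proposition \ref{w ineq T}, and for $q_0=1$ it is replaced by the ordinary triangle inequality.

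The $\mathcal{O}(n)$-invariance is only needed to control the rotated weights $w_{A_i^{-1}}$. With $A_1=I$ it becomes superfluous, although that is irrelevant for merely deducing the corollary. The main, and only mildly delicate, step is this bookkeeping of the parameter range $q_0\le1$ against the theorem's standing hypotheses. Everything else is a direct specialization.
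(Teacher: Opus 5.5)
Your proposal is correct and is exactly the paper's argument: specialize part \textbf{a)} of Theorem \ref{main thm} to $m=1$, $A_1=I$, $\alpha_1=n-\alpha$, so that $T_{\alpha,1}=I_\alpha$. Your endpoint remark is also right, and in fact no rerun is needed: the hypotheses inherited from part \textbf{a)} already include $q_0<1$ (equivalently $p_0<\frac{n}{n+\alpha}$), so the ``$q_0\le 1$'' in the corollary adds nothing.
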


\begin{proof}
Apply Theorem \ref{main thm} - \textbf{a)} with $m=1$ and $A_1 = I$.
\end{proof}

\section{Applications} \label{appl}

In this section we illustrate our results with four concrete examples of ball quasi-Banach function spaces 
$X$ and $Y$ satisfying the hypotheses of Theorem \ref{main thm}. That is:

(i) $X$ satisfies (\ref{A1}) and (\ref{A2}).

(ii) $X$ is strictly $s$-convex, where $s \in (0,1]$ is as in (\ref{A1}). 

(iii) $\| \cdot \|_X$ is $\mathcal{O}(n)$-invariant and absolutely continuous.

(iv - \textbf{a}) For $0 < \alpha < n$, there exist $0 < p_0 < q_0 \leq 1$ such that $\frac{1}{p_0} - \frac{1}{q_0} = \frac{\alpha}{n}$, $X$ 
satisfy (\ref{A2}) with $q > \max \left\{1, \frac{p_0 n}{\alpha} \right\}$, $X^{1/p_0}$ and $Y^{1/q_0}$ are ball Banach function spaces such that $(Y^{1/q_0})' = ((X^{1/p_0})')^{p_0/q_0}$ and the Hardy-Littlewood operator $M$ is bounded on $(Y^{1/q_0})'$.

(iv - \textbf{b}) There exists $0 < p_0 \leq 1$ such that $X^{1/p_0}$ is a ball Banach function space, the Hardy-Littlewood operator $M$ is bounded on $(X^{1/p_0})'$ and $X$ satisfy (\ref{A2}) with $q > \max \left\{1, p_0 \left(1+2^{n+3} \| M \|_{(X^{1/p_0})'} \right) \right\}$.

\

{\bf Weighted Lebesgue spaces.} Given $0 < p < \infty$ and a weight $w \in \mathcal{A}_{\infty}$ (see \cite{grafakos}, \cite{Stein}), the weighted Lebesgue space $L^{p}(w)$ is defined as the set of all the measurable functions $f$ on $\mathbb{R}^n$ such that
\[
\| f \|_{L^{p}(w)} := \left(  \int_{\mathbb{R}^n} |f(x)|^p w(x) dx \right)^{1/p} < \infty.
\]
By \cite[Section 7.1]{sawa}, the couple $(L^{p}(w), \| \cdot \|_{L^{p}(w)})$ is a ball quasi-Banach function space. If $p > 1$, then 
$L^p(w)$ is a ball Banach function space with $(L^p(w))' = L^{p'}(w^{1-p'})$, where $\frac{1}{p} + \frac{1}{p'} = 1$. 
In \cite[Section 7.1]{sawa}, it was pointed out that a weighted Lebesgue space may not be a Banach function space. Given $0 \leq \alpha < n$, let $\frac{1}{q} = \frac{1}{p} - \frac{\alpha}{n}$ with $p \in (0, \frac{n}{\alpha})$. For such $p$ and $q$, we consider $X=L^p(w)$ and 
$Y = L^q(w^{q/p})$ with $w(x) = |x|^{a}$ and $-n < a \leq 0$. Next, we will verify the hypotheses of Theorem \ref{main thm} for such $X$ and 
$Y$.

(i) For $X = L^p(w)$, the assumption (\ref{A1}) holds true for any 
$\theta, s \in (0,1]$, $\theta < s$, $p \in (\theta, \frac{n}{\alpha})$ and $w \in \mathcal{A}_{p/\theta}$ (it follows from 
\cite[Theorem 3.1 (b)]{John} applied on $L^{p/\theta}(w)$, with $r = s/\theta$ and $|f_j|^{\theta}$ instead of $f_k$). 
The assumption (\ref{A2}), by duality, \cite[Theorem 9]{Muckenh} and Remark \ref{cond equiv A2}, holds true for any 
$r \in (0, \min\{ 1, p \})$, $w \in \mathcal{A}_{p/r}$, and $\widetilde{q} \in (\max\{1, p\}, \infty)$ sufficiently large such that 
$w^{1-(p/r)'} \in \mathcal{A}_{(p/r)'/(\widetilde{q}/r)'}$.

(ii) By Fatou's Lemma, it follows that for any $s \in (0,p)$, the space $L^{p}(w)$ is \textit{strictly $s$-convex}, where 
$(L^{p}(w))^{1/s} = L^{p/s}(w)$ .

(iii) In general, if $w \in \mathcal{A}_{\infty}$, by the dominated convergence Theorem, we have that the quasi-norm of $X = L^{p}(w)$ is \textit{absolutely continuous}. Now, for $w(x) = |x|^{a}$ with $-n < a \leq 0$, it is clear that $X$ is $\mathcal{O}(n)$-\textit{invariant}. 

(iv - \textbf{a} and \textbf{b}) For any $s \in (0, p)$ and $w \in \mathcal{A}_{p/s}$, the Hardy-Littlewood maximal operator $M$ is 
bounded on $L^{p/s}_w$ and so also on $(L^{p/s}_{w})' = L^{(p/s)'}_{w^{1-(p/s)'}}$ since $w^{1-(p/s)'} \in \mathcal{A}_{(p/s)'}$ 
(see \cite[Theorem 9]{Muckenh}).  Let $Y = L^q(w^{q/p})$ with $0 <  p < \frac{n}{\alpha}$, $\frac{1}{q} = \frac{1}{p} - \frac{\alpha}{n}$ and
$0 \leq \alpha < n$. For any $p_0 \in (0, \min\{ \frac{n}{n+\alpha}, p \})$ fixed, we put $\frac{1}{q_0} := \frac{1}{p_0} - \frac{\alpha}{n}$, then $0 < p_0 \leq q_0 \leq 1$, $q_0 \in (0, q)$, $X^{1/p_0}$ and $Y^{1/q_0}$ are ball Banach function spaces and $M$ is bounded on 
$(Y^{1/q_0})'$. Since $\frac{1}{p} - \frac{1}{q} = \frac{\alpha}{n} = \frac{1}{p_0} - \frac{1}{q_0}$, it follows that 
$(Y^{1/q_0})' = ((X^{1/p_0})')^{p_0/q_0}$. By (i), the $\widetilde{q}$ satisfying (\ref{A2}) can be chosen such that 
$\widetilde{q} > \max \left\{1, \frac{p_0 n}{\alpha} \right\}$ or
$\widetilde{q} > \max \left\{1, p_0 \left(1+2^{n+3} \| M \|_{(X^{1/p_0})'} \right) \right\}$, according to the case.

Finally, since $w(x) = |x|^a \in \mathcal{A}_1$ for $-n < a \leq n$ (see \cite[Example 7.1.7]{grafakos}) and 
$\mathcal{A}_1 \subset \mathcal{A}_p$ for all $p \geq 1$, Theorem \ref{main thm} (and so also Corollary \ref{estim riesz}) applies with 
$X=L^p(|\cdot|^a)$ and $Y = L^q(|\cdot|^{aq/p})$, where $0 < p < \frac{n}{\alpha}$, $\frac{1}{q} = \frac{1}{p} - \frac{\alpha}{n}$, $0 \leq \alpha < n$ and $-n < a \leq 0$ (cf. \cite[Theorems 17 and 19]{Rocha2023}).

\

{\bf Variable Lebesgue spaces.} Let $p(\cdot) : \mathbb{R}^n \to (0, \infty)$ be a measurable function such that
\[ 
0 < p_{-} := \essinf_{x \in \mathbb{R}^n} p(x) \leq \esssup_{x \in \mathbb{R}^n} p(x) : = p_{+} < \infty.
\]
Define the modular associated with $p(\cdot)$ by
\[
\kappa_{p(\cdot)}(f) := \int_{\mathbb{R}^n} |f(x)|^{p(x)} dx.
\]
Then, the variable Lebesgue space $L^{p(\cdot)}(\mathbb{R}^n)$ consists of all the measurable functions $f$ such that
\[
\| f \|_{p(\cdot)} := \inf \left\{ \lambda > 0 : \kappa_{p(\cdot)}(f/\lambda) \leq 1 \right\} < \infty.
\]
The amount $\Vert f \Vert_{p(\cdot)}$ is known as the Luxemburg norm of $f$ with respect to $p(\cdot)$. By \cite[Section 7.8]{sawa}, we have that the couple $(L^{p(\cdot)}(\mathbb{R}^n), \| \cdot \|_{p(\cdot)})$ is a ball quasi-Banach function space. If $p_{-} > 1$, then $L^{p(\cdot)}(\mathbb{R}^n)$ is a ball Banach function space with $(L^{p(\cdot)}(\mathbb{R}^n))' = L^{p'(\cdot)}(\mathbb{R}^n)$, where $\frac{1}{p(x)} + \frac{1}{p'(x)} = 1$ for all $x$. An exponent $p(\cdot) : \mathbb{R}^n \to (0, \infty)$ is said to be globally log-H\"older continuous if there exist positive constants $C$ and $p_{\infty}$ such that
\[
|p(x) - p(y)| \leq \frac{-C}{\log(|x-y|)}, \,\, \text{for} \,\, |x-y| \leq 1/2
\]
and
\[
|p(x) - p_{\infty}| \leq \frac{C}{\log(e + |x|)}, \,\, \text{for all} \,\, x \in \mathbb{R}^n.
\]
From now on, we consider the space $L^{p(\cdot)}(\mathbb{R}^n)$ with $p(\cdot)$ globally log-H\"older continuous.

(i) For $X:= L^{p(\cdot)}$, the assumption (\ref{A1}) holds true for any $s \in (0,1]$ and $\theta \in (0, \min\{s, p_{-}\})$ (indeed, we apply \cite[Lemma 2.4]{Nakai} on $L^{p(\cdot)/\theta}$, with $u=s/\theta$ and $|f_j|^{\theta}$ instead of $f_j$); the assumption (\ref{A2}) holds true for any $r \in (0, \min\{ 1, p_{-}\})$ and $\widetilde{q} \in (\max \{1, p_{+}\}, \infty)$ (this follows by duality, 
\cite[Theorem 1.5]{DCruz} and Remark \ref{cond equiv A2}).

(ii) By \cite[Proposition 2.18 and Theorem 2.61]{Fiorenza}, for any $s \in (0, p_{-})$, $(L^{p(\cdot)})^{1/s} = L^{p(\cdot)/s}$ and the space $L^{p(\cdot)}$ is \textit{strictly $s$-convex}.

(iii) By dominated convergence Theorem, we have that if $\{ f_j \} \subset X$ and $f_j \to 0$ a.e., then $\kappa_{p(\cdot)}(f_j) \to 0$. 
So, from \cite[Theorem 2.69]{Fiorenza}, it follows $\| f_j \|_{p(\cdot)} \to 0$. Then, the quasi-norm $\| \cdot \|_{p(\cdot)}$ is 
\textit{absolutely continuous}. If the exponent $p(\cdot)$ is radial (i.e.: for any $A \in \mathcal{O}(n)$ fixed, $p(Ax) = p(x)$ for all 
$x \in \mathbb{R}^n$), then the quasi-norm 
$\| \cdot \|_{p(\cdot)}$ is $\mathcal{O}(n)$-\textit{invariant}.

(iv - \textbf{a} and \textbf{b}) For any $s \in (0, p_{-})$, the Hardy-Littlewood maximal operator $M$ is bounded on $L^{p(\cdot)/s}$ (see \cite[Theorem 1.5]{DCruz}), and so also on $(L^{p(\cdot)/s})' = L^{(p(\cdot)/s)'}$, since the exponent $(p(\cdot)/s)'$ results globally 
log-H\"older continuous with $((p(\cdot)/s)')_{-} > 1$. Given $0 \leq \alpha < n$, let $p(\cdot)$ be an exponent such that $0 < p_{-} \leq p_{+} < \frac{n}{\alpha}$ and is globally log-H\"older continuous. Then, we define $\frac{1}{q(\cdot)} : = \frac{1}{p(\cdot)} - \frac{\alpha}{n}$, 
such $q(\cdot)$ results globally log-H\"older continuous. Let $X:= L^{p(\cdot)}(\mathbb{R}^n)$ and $Y := L^{q(\cdot)}(\mathbb{R}^n)$. For any 
$p_0 \in (0, \min\{ \frac{n}{n+\alpha}, p_{-} \})$ fixed, we put $\frac{1}{q_0} := \frac{1}{p_0} - \frac{\alpha}{n}$, then 
$0 < p_0 \leq q_0 \leq 1$, $q_0 \in (0, q_{-})$, $X^{1/p_0}$ and $Y^{1/q_0}$ are ball Banach function spaces and $M$ is 
bounded on $(Y^{1/q_0})'$. Since $\frac{1}{p(\cdot)} - \frac{1}{q(\cdot)} = \frac{\alpha}{n} = \frac{1}{p_0} - \frac{1}{q_0}$ it follows that
$(Y^{1/q_0})' = ((X^{1/p_0})')^{p_0/q_0}$. By (i), the $\widetilde{q}$ satisfying (\ref{A2}) can be chosen such that 
$\widetilde{q} > \max \left\{1, \frac{p_0 n}{\alpha} \right\}$ or
$\widetilde{q} > \max \left\{1, p_0 \left(1+2^{n+3} \| M \|_{(X^{1/p_0})'} \right) \right\}$, according to the case.

Now, we have that $H_{X}(\mathbb{R}^n) = H^{p(\cdot)}(\mathbb{R}^n)$ is the variable Hardy spaces with exponent $p(\cdot)$ defined in 
\cite{Nakai}. Finally, if $p(\cdot)$ is radial and globally log-H\"older continuous, then Theorem \ref{main thm} 
(and so also Corollary \ref{estim riesz}) applies with $X= L^{p(\cdot)}$ and $Y = L^{q(\cdot)}$ and recover \cite[Theorem 1.1]{Rocha2018}.

\

{\bf Lorentz spaces.} Given $0 < p, \, q < \infty$, the Lorentz space $L^{p, q}(\mathbb{R}^n)$ is defined as the collection of all the measurable function $f$ such that
\[
\| f \|_{L^{p, q}} := \left( \int_{0}^{\infty} (t^{1/p} f^{\ast}(t))^{q} \, \frac{dt}{t} \right)^{1/q} 
< \infty,
\]
where $f^{\ast}$, the decreasing rearrangement function of $f$, is defined by setting, for any $t \in [0, \infty)$, 
$f^{\ast}(t) := \inf \{ s > 0 : |\{x : |f(x)| > s \}| \leq t \}$.
By \cite[Section 7.3]{sawa}, the couple $\left( L^{p, q}(\mathbb{R}^n), \| \cdot \|_{L^{p, q}} \right)$  is a ball quasi-Banach function space, whose quasi-norm $\| \cdot \|_{L^{p, q}}$ satisfies $\| |g |^r \|_{L^{p, q}} = \| g \|_{L^{pr, qr}}^r$ for all 
$0 < p, \, q, r < \infty$. When $1 < p, \, q < \infty$, $(L^{p, q})' = L^{p', q'}$, where $\frac{1}{p} + \frac{1}{p'} = 1$ and 
$\frac{1}{q} + \frac{1}{q'} = 1$ (see \cite[Theorem 4.7]{Bennett}).

(i) For $X:= L^{p, q}(\mathbb{R}^n)$, the assumption (\ref{A1}) holds true for any $s \in (0,1]$ and $\theta \in (0, \min\{ s, p, q \})$ 
(apply conveniently \cite[Theorem 2.3 (iii)]{Curbera}). Now, by duality, \cite[Theorem 2.3 (iii)]{Curbera} and Remark \ref{cond equiv A2}, 
the assumption (\ref{A2}) holds true for any $r \in (0, \min\{ 1, p, q \})$ and $\widetilde{q} \in (\max \{1, p, q\}, \infty)$.
 
(ii) For any $s \in (0, \min\{p, q \})$, by \cite[Theorem 4.6]{Bennett}, the space $(L^{p, q})^{1/s} = L^{p/s, q/s}$ is a Banach function space and so $L^{p, q}$ is \textit{strictly $s$-convex} (see \cite[Theorem 1.6]{Bennett}).

(iii) For $0 < p, q < \infty$, it is clear that $\| \cdot \|_{L^{p, q}}$ is $\mathcal{O}(n)$-\textit{invariant} and \textit{absolutely continuous} (see \cite[Chapter 4, Section 4]{Bennett}).

(iv - \textbf{a} and \textbf{b}) Given $0 \leq \alpha < n$ and $0 < p, q < \frac{n}{\alpha}$, we put $\frac{1}{u} := \frac{1}{p} - \frac{\alpha}{n}$ and $\frac{1}{v} := \frac{1}{q} - \frac{\alpha}{n}$. Now, we consider $X:= L^{p, q}(\mathbb{R}^n)$ and 
$Y:= L^{u, v}(\mathbb{R}^n)$. For any $p_0 \in (0, \min\{ \frac{n}{n+\alpha}, p, q \})$, we put 
$\frac{1}{q_0}:= \frac{1}{p_0} - \frac{\alpha}{n}$, then $0 < p_0 \leq q_0 \leq 1$, $q_0 \in (0, \min\{u, v\})$, $X^{1/p_0}$ 
and $Y^{1/q_0}$ are ball Banach function spaces such that $M$ is bounded on $(Y^{1/q_0})'$ (see \cite[Theorem 2.3 (i)]{Curbera}). Since  
$\frac{1}{p} - \frac{1}{u}= \frac{1}{p_0} - \frac{1}{q_0}= \frac{1}{q} - \frac{1}{v}$, we have $(Y^{1/q_0})' = ((X^{1/p_0})')^{p_0/q_0}$.
By (i), the $\widetilde{q}$ in (\ref{A2}) can be chosen such that $\widetilde{q} > \max \left\{1, \frac{p_0 n}{\alpha} \right\}$ or
$\widetilde{q} > \max \left\{1, p_0 \left(1+2^{n+3} \| M \|_{(X^{1/p_0})'} \right) \right\}$, according to the case. 

Finally, $H_X (\mathbb{R}^n) = H^{p, q}(\mathbb{R}^n)$ is the Hardy-Lorentz space defined in \cite{Abu}. Then, Theorem \ref{main thm} 
(and so also Corollary \ref{estim riesz}) applies on such $X$ and $Y$.

\

{\bf Orlicz spaces.} A function $\Phi : [0, \infty) \to [0, \infty)$ is called an \textit{Orlicz function} if

(1) it is non-decreasing and satisfies $\lim_{t \to 0^{+}} \Phi(t) = \Phi(0) = 0$, $\Phi(t) > 0$ for all $t>0$ and 
$\lim_{t \to \infty} \Phi(t) = \infty$;

(2) the function $x \to \Phi(\vert f(x) \vert)$ is measurable for every measurable function $f$ on $\mathbb{R}^n$.

\

Given an Orlicz function $\Phi$, define the modular associated with $\Phi$ by
\[
\kappa_{\Phi}(f) := \int_{\mathbb{R}^n} \Phi(|f(x)|) \, dx.
\]
Then, the Orlicz space $L^{\Phi}(\mathbb{R}^n)$ is defined to be the set of all measurable functions $f$ on $\mathbb{R}^n$ such that
\[
\| f \|_{\Phi} := \inf \left\{ \lambda > 0 : \kappa_{\Phi}(f/\lambda) \leq 1 \right\} < \infty. 
\]
The amount $\| f \|_{\Phi}$ is known as the Luxemburg norm of $f$ with respect to $\Phi$. We say that two Orlicz function $\Phi$ and $\Psi$ are equivalent, denote $\Phi \sim \Psi$, if $\Psi(t/C) \leq \Phi(t) \leq \Psi(Ct)$ for some $C \geq 1$ and all $t \geq 0$, and so 
$L^{\Phi} = L^{\Psi}$ with $\| \cdot \|_{\Phi} \approx  \| \cdot \|_{\Psi}$. For any $s \in (0, \infty)$ fixed, we set 
$\Phi_s(t) := \Phi(t^s)$ for all $t >0$. Then, $\Phi_s$ is an Orlicz function and for any measurable function $f$, one has 
$\| f \|_{\Phi}^{s} = \| |f|^s \|_{\Phi_{1/s}}$.

An Orlicz function $\Phi : [0, \infty) \to [0, \infty)$ is said to be of positive lower (respectively, positive upper) type $p$ with 
$p \in (0, \infty)$ if there exists a positive constant $C$, depending on $p$, such that, for any $t >0$ and $r \in (0,1]$ (respectively, 
$r \in [1, \infty)$),
\begin{equation} \label{lower upper type}
\Phi(rt) \leq C r^p \Phi(t).
\end{equation}
When $\Phi$ is an Orlicz function with positive lower type $p_{\Phi}^{-}$ and positive upper type $p_{\Phi}^{+}$, by \cite[p. 92]{sawa}, the couple $(L^{\Phi}, \| \cdot \|_{\Phi})$ is a ball quasi-Banach space. Moreover, if $\Phi$ is an Orlicz function of positive lower (resp., positive upper) type $p_{\Phi}^{-}$ (resp., type $p_{\Phi}^{+}$),  then $\Phi_s$ is of positive lower (resp., positive upper) type  
$s p_{\Phi}^{-}$ (resp., type $s p_{\Phi}^{+}$).

A function $\Phi : [0, \infty) \to [0, \infty)$ is called a \textit{Young function} if $\Phi$ is convex, left-continuous, $\lim_{t \to 0^{+}} \Phi(t) = \Phi(0) = 0$, and $\lim_{t \to \infty} \Phi(t) = \infty$. From the convexity and $\Phi(0) = 0$, it follows that any Young function is non-decreasing.

If $\Phi$ is a young function, from \cite[Lemma 3.2.2 (b) and Theorem 3.3.7 (b)]{Hasto}, it follows that the couple 
$(L^{\Phi}, \| \cdot \|_{\Phi})$ is a Banach space.

If an Orlicz function $\Phi$ is also a Young function, then $\Phi$ is a bijective continuous function from $[0, \infty)$ onto $[0, \infty)$.

For a Young function $\Phi$, we define $\Phi^{-1}$ and its complementary function $\widetilde{\Phi}$ on $[0, \infty)$ by
\[
\Phi^{-1}(s):= \inf\{ t \geq 0 : \Phi(t) > s \}
\]
and
\[
\widetilde{\Phi}(t) := \sup\{ ts - \Phi(s) : s \in [0, \infty) \},
\]
respectively. Then, the K\"othe dual $(L^{\Phi})' = L^{\widetilde{\Phi}}$ with comparable norms 
(see \cite[Lemma 2.4.2 and Theorem 3.4.6]{Hasto}). Moreover, by \cite[Property 1.6]{ONeil}, we have that
\begin{equation} \label{prop 1.6}
s \leq \Phi^{-1}(s) (\widetilde{\Phi})^{-1}(s) \leq 2s, \,\,\,\, s \geq 0.
\end{equation}
If $\Phi$ is a Young-Orlicz function, then $\Phi^{-1}$ is the usual inverse function of $\Phi$.

An Orlicz function $\Phi$ is called an $N$-function if it is a continuous and convex function such that
\begin{equation} \label{2 limites}
\lim_{t \to 0^{+}} \frac{\Phi(t)}{t} = 0, \,\,\,\,\,\, \text{and} \,\,\,\,\,\,  \lim_{t \to \infty} \frac{\Phi(t)}{t} = \infty.
\end{equation}
From \cite[Lemma 2.16]{Zhang2019} we have that if $\Phi$ is an Orlicz function of positive lower type $p_{\Phi}^{-} \in (1, \infty)$ and positive upper type $p_{\Phi}^{+}$, then there exists an Orlicz 
$N$-function $\Psi$ equivalent to $\Phi$. Thus, without loss of generality, we may always assume that an Orlicz function $\Phi$ of positive lower type $p_{\Phi}^{-}$ and positive upper type $p_{\Phi}^{+}$, with $1 < p_{\Phi}^{-} \leq p_{\Phi}^{+} < \infty$, is also an $N$-function. In particular, an Orlicz $N$-function is a Young-Orlicz function.

(i) If $\Phi$ is an Orlicz function with positive lower type $p_{\Phi}^{-}$ and positive upper type 
$p_{\Phi}^{+}$, then (\ref{A1}) holds true for $X= L^{\Phi}(\mathbb{R}^n)$, any $s \in (0,1]$ and any 
$\theta \in (0, \min\{ s, p_{\Phi}^{-}\})$ (see \cite[Theorem 7.14 (i)]{sawa}). To verify (\ref{A2}), let $s \in (0, p_{\Phi}^{-})$. Since, 
we can assume that $\Phi_{1/s}$ is an Orlicz $N$-function, by \cite[Theorem 3.4.6]{Hasto}, we have 
\[
(X^{1/s})' = L^{\widetilde{(\Phi_{1/s})}}, \,\,\,\, \text{and so} \,\,\,\, [(X^{1/s})']^{1/(q/s)'}= L^{\widetilde{(\Phi_{1/s})}_{1/(q/s)'}}
\]
for any $q \in (\max\{1, p_{\Phi}^{+} \}, \infty)$. In turn, by \cite[Lemma 2.4.2]{Hasto} and (\ref{2 limites}), the complementary function 
$\widetilde{(\Phi_{1/s})}$ is a Young-Orlicz function. Moreover, by \cite[Proposition 7.8]{sawa}, the function
$\Theta(t) := \widetilde{(\Phi_{1/s})}_{1/(q/s)'}(t)$ is such that $p_{\Theta}^{-} = (p_{\Phi}^{-}/s)'/(q/s)' > 1$, being $\Theta$ an Orlicz function, by \cite[Theorem 7.12]{sawa}, the Hardy-Littlewood maximal $M$ is bounded on $L^{\Theta}$. Finally, Remark \ref{cond equiv A2} implies that (\ref{A2}) holds true for $X = L^{\Phi}$, any $s \in (0, p_{\Phi}^{-})$ and any $q \in (\max\{1, p_{\Phi}^{+} \}, \infty)$.

(ii) Given an Orlicz function $\Phi$ with positive lower type $p_{\Phi}^{-}$ and positive upper type 
$p_{\Phi}^{+}$, and $s \in (0, p_{\Phi}^{-})$, by \cite[Lemma 2.16]{Zhang2019}, there exists 
a Young-Orlicz function $\Psi$ equivalent to $\Phi_{1/s}$ (i.e. $L^{\Phi_{1/s}} = L^{\Psi}$ with comparable quasi-norms), since 
$\| \cdot \|_{\Psi}$ is a norm, we can assume that $X = L^{\Phi}$ is strictly $s$-convex for $s \in (0, p_{\Phi}^{-})$.

(iii) It is clear that $\Vert \cdot \Vert_{\Phi}$ is $\mathcal{O}(n)$-\textit{invariant}. Since $\lim_{t \to 0^{+}} \Phi(t) = \Phi(0) = 0$, 
by dominated convergence Theorem, we have for every sequence $\{ f_j \} \subset L^{\Phi}$, with $f_j \downarrow 0$, that 
$\kappa_{\Phi}(f_j) \to 0$ and so $\Vert f_j \Vert_{\Phi} \to 0$. Let's see this, suppose that $0< \kappa_{\Phi}(f_j) \to 0$. Given $
0 < \epsilon < 1$ and $C > 0$ as in (\ref{lower upper type}), we have $0 < C \kappa_{\Phi}(f_j) < \epsilon^{p_{\Phi}^{+}}$, for all $j$ large enough, and
\[
\kappa_{\Phi} \left( C^{-1/p_{\Phi}^{+}} \kappa_{\Phi}(f_j)^{-1/p_{\Phi}^{+}} f_j \right)  \leq \kappa_{\Phi}(f_j)^{-1} \kappa_{\Phi}(f_j) = 1, 
\]
so $\| f_j \|_{\Phi} \leq C^{1/p_{\Phi}^{+}} \kappa_{\Phi}(f_j)^{1/p_{\Phi}^{+}} < \epsilon$. Then, $\| \cdot \|_{\Phi}$ is \textit{absolutely continuous}.

(iv \textbf{a} and \textbf{b}).  Given $0 \leq \alpha < n$, let $\Phi$ be an Orlicz function such that 
$0 < p_{\Phi}^{-} \leq p_{\Phi}^{+} < \frac{n}{\alpha}$, by \cite[Lemma 2.5]{Zhang2019} we can assume that $\Phi$ is invertible. Now, let 
$\Psi$ be an Orlicz function such that
\begin{equation} \label{inversas}
\Psi^{-1}(t) \sim t^{-\alpha/n} \Phi^{-1}(t).
\end{equation}
So, such $\Psi$ is of positive lower type $p_{\Psi}^{-} = \frac{n p_{\Phi}^{-}}{n - \alpha p_{\Phi}^{-}}$ and upper type 
$p_{\Psi}^{+} =  \frac{n p_{\Phi}^{+}}{n - \alpha p_{\Phi}^{+}}$.

Now, we consider $X:= L^{\Phi}(\mathbb{R}^n)$ and $Y:= L^{\Psi}(\mathbb{R}^n)$. For any $p_0 \in (0, \min\{ \frac{n}{n+\alpha}, p_{\Phi}^{-} \})$, we put $\frac{1}{q_0}:= \frac{1}{p_0} - \frac{\alpha}{n}$, then $0 < p_0 \leq q_0 \leq 1$, $q_0 \in (0, p_{\Psi}^{-})$, $X^{1/p_0}$ 
and $Y^{1/q_0}$ are ball Banach function spaces such that $M$ is bounded on $(Y^{1/q_0})'$ (see \cite[Theorem 7.12]{sawa}). Since  
$\frac{1}{p_0} - \frac{1}{q_0}= \frac{\alpha}{n}$, (\ref{prop 1.6}) and (\ref{inversas}) give 
$[\widetilde{(\Phi_{1/p_0})}]_{p_0/q_0} \sim \widetilde{(\Psi_{1/q_0})}$, and thus $(Y^{1/q_0})' = ((X^{1/p_0})')^{p_0/q_0}$.
By (i), the $q$ in (\ref{A2}) can be chosen such that $q > \max \left\{1, \frac{p_0 n}{\alpha} \right\}$ or
$q > \max \left\{1, p_0 \left(1+2^{n+3} \| M \|_{(X^{1/p_0})'} \right) \right\}$, according to the case. 

Finally, $H_X (\mathbb{R}^n) = H^{\Phi}(\mathbb{R}^n)$ is the Orlicz-Hardy space defined in \cite{Nakai2014}. Then, Theorem \ref{main thm} (and so also Corollary \ref{estim riesz}) applies on such $X$ and $Y$.


Pablo Rocha, Instituto de Matem\'atica (INMABB), Departamento de Matem\'atica, Universidad Nacional del Sur (UNS)-CONICET, Bah\'ia Blanca, Argentina. \\
{\it e-mail:} pablo.rocha@uns.edu.ar

\end{document}